\newtheorem{dummy}{dummy}[section]
\newtheorem{lemma}[dummy]{Lemma}
\newtheorem{theorem}[dummy]{Theorem}
\newtheorem{corollary}[dummy]{Corollary}
\newtheorem{proposition}[dummy]{Proposition}
\theoremstyle{definition}
\newtheorem{definition}[dummy]{Definition}
\newtheorem{remark}[dummy]{Remark}
\newcommand{\bQ}{\mathbb{Q}}
\newcommand{\cA}{\mathcal{A}}
\newcommand{\cB}{\mathcal{B}}
\newcommand{\cC}{\mathcal{C}}
\newcommand{\cD}{\mathcal{D}}
\newcommand{\cE}{\mathcal{E}}
\newcommand{\cF}{\mathcal{F}}
\newcommand{\cG}{\mathcal{G}}
\newcommand{\cH}{\mathcal{H}}
\newcommand{\cI}{\mathcal{I}}
\newcommand{\cO}{\mathcal{O}}
\newcommand{\cP}{\mathcal{P}}
\newcommand{\cQ}{\mathcal{Q}}
\newcommand{\cR}{\mathcal{R}}
\newcommand{\cS}{\mathcal{S}}
\newcommand{\cT}{\mathcal{T}}
\newcommand{\cX}{\mathcal{X}}
\newcommand{\cY}{\mathcal{Y}}
\newcommand{\cZ}{\mathcal{Z}}
\newcommand{\Perf}{\mathcal{P}\mathrm{erf}}
\begin{document}

\title[Derived loop stacks and categorification of orbifold products]
{Derived loop stacks and categorification of orbifold products}

\begin{abstract}
The existence of interesting multiplicative cohomology theories for orbifolds was first suggested by string theorists, and orbifold products have been intensely studied by mathematicians for the last fifteen years. %, and several distinct but related constructions are now available: 
In this paper we focus on the \emph{virtual orbifold product} that was first introduced in Lupercio et al. (2007). We construct a categorification of the virtual orbifold product that leverages the geometry of derived loop stacks. %as studied by Ben-Zvi Francis and Nadler. 
%Among its applications, 
By work of Ben-Zvi Francis Nadler, this reveals connections between virtual orbifold products and  Drinfeld centers of  monoidal categories, thus   answering a question of Hinich. 
  \end{abstract}

\author{Sarah Scherotzke}
\address{Sarah Scherotzke, 
Mathematical Institute of the University of Bonn, 
Endenicher Allee 60, 
53115 Bonn,
Germany}
\email{sarah@math.uni-bonn.de}

\author{Nicol\`o Sibilla}
\address{Nicol\`o Sibilla, Department of Mathematics,    University of British Columbia, 1984 Mathematics Road, 
Vancouver, B.C.
Canada V6T 1Z2
}
\email{sibilla@math.ubc.ca}

\maketitle

{\small \tableofcontents}

\section{Introduction}

The existence of non-trivial  multiplicative cohomology theories for orbifolds was first suggested by  work of string theorists   
\cite{Z}. Chen and Ruan \cite{CR} gave a mathematical formalization of these ideas: 
the Chen-Ruan cohomology of an orbifold $\cX$, $H_{CR}^*(\cX)$, is a graded $\bQ$-algebra that is linearly isomorphic to the cohomology 
of the inertia orbifold $I \cX$,  but carries a non-trivial associative product (the \emph{orbifold product}) defined in terms of
 the degree zero Gromov-Witten theory of $\cX$. 
After work of several authors this theory was recast in the language of 
algebraic geometry  \cite{FG, AGV1, AGV2}, and the definition of orbifold products was extended to many different cohomology theories. In  \cite{JKK} Jarvis, Kaufmann and Kimura defined the  orbifold K-theory of a global quotient DM stack 
$\cX$: as in the case of Chen-Ruan cohomology, orbifold K-theory is isomorphic as a linear space to the algebraic K-theory of the inertia orbifold, $I \cX$, but is equipped with a non-trivial orbifold product.

It was later realized that orbifold cohomology theories admit in fact a rich web of distinct multiplicative structures (called   \emph{inertial products} in \cite{EJK2}) 
that are governed by various virtual bundles on the double inertia stack $I^2 \cX$: the orbifold product is only one of them. An especially important variant of the orbifold product is the \emph{virtual orbifold product} introduced in \cite{LUX, LUX+}, and investigated in \cite{EJK1, EJK2} from the perspective of algebraic geometry. We denote $*^{virt}$ the virtual orbifold product, and 
$K^{virt}(\cX):= K_0(I \cX, *^{virt})$ the virtual orbifold K-theory of $\cX$. In this paper we achieve a categorification of virtual orbifold K-theory. 

Let 
$\cX$ be a smooth DM stack, and assume that $\cX$ admits a presentation as a global quotient $\cX = [X/G]$, where $X$ is an affine scheme and $G$ is a linear group. Denote $L \cX$ the derived loop stack of $\cX$ in the sense of \cite{TV, BFN}. The bounded derived category of $L \cX$, 
$D^b(Coh(L \cX))$ carries a braided monoidal structure  which was defined in  \cite{BFN}: we denote it $\otimes^{str}$, and denote $G_0(L \cX, \otimes^{str})$ 
the Grothendieck group of $D^b(Coh(L \cX))$ together with the commutative product induced by $\otimes^{str}$. 
The following is our main theorem.

\begin{theorem}
\label{main:loop}
Let $\iota: I \cX \rightarrow L \cX$ be the natural inclusion. Then 
$\iota_*$ gives rise to an isomorphism of rings:  
$$
\iota_*: K^{virt}(\cX) = K_0(I \cX, *^{virt}) \stackrel{\cong}{\rightarrow} G_0(L \cX, \otimes^{str}).
$$
\end{theorem}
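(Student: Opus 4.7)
The plan is to first establish the linear isomorphism, then compare the two ring structures via a careful analysis of the derived structure on the relevant fiber products.

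For the linear statement, since $\cX$ is smooth, the inertia stack $I\cX$ is the classical truncation $t_0 L\cX$ of the derived loop stack, and $L\cX \to \cX$ is quasi-smooth. By the well-known invariance of $G$-theory under derived thickening (which amounts to the fact that the inclusion of a classical truncation induces an equivalence on perfect complexes modulo homotopy), $\iota_*: G_0(I\cX) \to G_0(L\cX)$ is an isomorphism. For $I\cX$ a smooth Deligne--Mumford stack arising from a quotient presentation $[X/G]$ as in the hypothesis, $G_0$ and $K_0$ coincide, giving the underlying linear isomorphism.

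The main work is to match the ring structures. By \cite{BFN}, the convolution structure $\otimes^{str}$ is given by $\cF \otimes^{str} \cG = m_*(p_1^*\cF \otimes p_2^*\cG)$, where $p_1, p_2: L\cX \times^R_\cX L\cX \to L\cX$ are the projections of the derived fiber product and $m: L\cX \times^R_\cX L\cX \to L\cX$ is the loop-composition map (all coming from the groupoid structure of $L\cX$ viewed as $\cX \times^R_{\cX \times \cX} \cX$). To compute $\iota_*(a) \otimes^{str} \iota_*(b)$, I would perform derived base change along the classical-truncation inclusion of $I^2\cX := I\cX \times_\cX I\cX$ into $L\cX \times^R_\cX L\cX$. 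The two derived fiber products differ precisely by a quasi-smooth thickening, and the derived base-change formula converts pushforward from the derived fiber product into pushforward from $I^2\cX$ twisted by the $\lambda_{-1}$-class of an excess bundle, which I will denote $\mathbb{E}_{\mathrm{BFN}}$, computed from the cotangent complex of $L\cX \times^R_\cX L\cX$ relative to $I^2\cX$.

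The expected identity then reads
\[
\iota_*(a) \otimes^{str} \iota_*(b) \;=\; \iota_* \mu_*\bigl( e_1^* a \cdot e_2^* b \cdot \lambda_{-1}(\mathbb{E}_{\mathrm{BFN}}^\vee)\bigr),
\]
where $e_1, e_2, \mu: I^2\cX \to I\cX$ are the two evaluations and the multiplication map. On the other hand, following \cite{LUX, EJK1, EJK2}, the virtual orbifold product has the form $a *^{virt} b = \mu_*( e_1^* a \cdot e_2^* b \cdot \lambda_{-1}(\mathbb{E}^\vee))$ for an explicit virtual obstruction bundle $\mathbb{E}$ on $I^2\cX$ expressible in terms of the tangent bundles of the twisted sectors and the ambient stack. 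The crux of the theorem is therefore the identification $\mathbb{E}_{\mathrm{BFN}} \simeq \mathbb{E}$ in $K^0(I^2\cX)$.

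I expect this last identification to be the main obstacle. It reduces, by restriction to a chart $[X/G]$, to a linear-algebra computation: at a point of $I^2\cX$ represented by a pair $(g_1, g_2) \in G \times G$ with fixed loci $X^{g_1}, X^{g_2}, X^{g_1 g_2}$ in $X$, one must match the derived-cotangent contribution of $L\cX \times^R_\cX L\cX$ with the classical formula for $\mathbb{E}$ in terms of the normal bundles of fixed loci and the $g_i$-weight decomposition of $T_xX$. Once this local matching is done, compatibility with the global structure follows from functoriality of the cotangent complex; multiplicativity and associativity of the whole identification then come for free from the respective monoidal structures, and the ring isomorphism is proved.
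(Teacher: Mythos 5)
Your overall architecture is the same as the paper's: establish the linear isomorphism via invariance of $G$-theory under derived thickening of $I\cX = t_0(L\cX)$ inside $L\cX$, then reduce the string product to a convolution on the classical double inertia stack twisted by the $\lambda_{-1}$-class of an excess bundle, and finally match that bundle with the class $\cB$ of Definition \ref{def:virt}. (One small remark on the first step: the parenthetical justification you give --- that the inclusion of the classical truncation induces an equivalence on perfect complexes --- is not the right statement and is false in general; what is true is that it induces an equivalence on the hearts of the coherent t-structures, and the $G$-theory statement then follows from Barwick's theorem of the heart, as in Corollary \ref{cor:heart}.)

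The genuine gap is in your recipe for the correction class $\mathbb{E}_{\mathrm{BFN}}$. You propose to compute it from the cotangent complex of $P\cX = L\cX \times_\cX L\cX$ relative to its classical truncation $I^2\cX$. That is the wrong derived thickening: on the component indexed by $(g,h)$ this excess class is $TX + TX^{g,h}$ (this is the bundle $\cE_P$ of Proposition \ref{prop:excess}), which differs from $\cB = TX - TX^g - TX^h + TX^{g,h}$ by $TX^g + TX^h$, so with your recipe the ``crux'' identification $\mathbb{E}_{\mathrm{BFN}} \simeq \mathbb{E}$ fails rather than being a routine local computation. The class that actually appears is controlled not by the derived structure of $P\cX$ but by that of the intermediate \emph{derived double inertia stack} $\cI^2\cX := I\cX \times_\cX I\cX$ (the derived fiber product of the \emph{classical} inertia stacks, which is a strictly smaller derived thickening of $I^2\cX$ than $P\cX$ and which your writeup never separates from $I^2\cX$). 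The reason it is $\cI^2\cX$ that matters is that $\iota_*a$ and $\iota_*b$ are pushed forward from $I\cX$: after base change along $p_1, p_2$ the tensor product $p_1^*\iota_*a \otimes p_2^*\iota_*b$ is a pushforward from $Y \times_{P\cX} Y \cong \cI^2\cX$, where $Y = P\cX \times_{L\cX} I\cX$ (Lemmas \ref{lem:fiberfiber} and \ref{lem:conv-pro}), so the derived structure of $P\cX$ in excess of $\cI^2\cX$ never enters the formula. The correction is then the class $\sum_i (-1)^i \pi_i \cO_{\cI^2\cX} \in K_0(I^2\cX)$, which by the excess intersection formula equals $\lambda_{-1}(\cE_{\cI^2}^\vee)$ with $\cE_{\cI^2} = TX - TX^g - TX^h + TX^{g,h}$ on the component $X^g \times_X X^h$, and this does coincide with $\cR = \lambda_{-1}(\cB^\vee)$. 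In short, the missing idea is the identification of \emph{which} of the two derived enhancements of $I^2\cX$ governs the product; once that is corrected, the rest of your plan goes through essentially as in the paper.
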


Using the the results of \cite{BFN} Theorem \label{main:loop} can be reformulated as follows.

\begin{theorem}
\label{main:drinfeld}
Let 
$\cT r(D^b(Coh(\cX)))$ be the derived Drinfeld center (in the sense of \cite{BFN}) of the the symmetric monoidal category $D^b(Coh(\cX))$. Then there is a natural   isomorphism:
$$
K^{virt}(\cX) \cong K_0(\cT r(D^b(Coh(\cX))).
$$
\end{theorem}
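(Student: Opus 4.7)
The plan is to obtain Theorem \ref{main:drinfeld} as an essentially formal consequence of Theorem \ref{main:loop}, by invoking the identification of the derived loop stack with the Drinfeld center proved by Ben-Zvi, Francis, and Nadler in \cite{BFN}. Since all the hard geometric and K-theoretic work is already encapsulated in Theorem \ref{main:loop}, the task here is essentially bookkeeping: translate the statement from the loop stack side to the Drinfeld center side.

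First, I would recall the main structural theorem of \cite{BFN}: for a perfect derived stack $\cX$, there is a natural symmetric monoidal equivalence
$$
QCoh(L\cX) \;\simeq\; \cT r(QCoh(\cX)),
$$
where the left-hand side carries the convolution monoidal structure $\otimes^{str}$ arising from the groupoid $L\cX \rightrightarrows \cX$, and the right-hand side carries its natural derived Drinfeld center monoidal structure. The hypothesis that $\cX = [X/G]$ with $X$ affine and $G$ a linear algebraic group ensures that $\cX$ is perfect in the sense of \cite{BFN}, so this equivalence applies.

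Second, I would argue that the BFN equivalence descends to an equivalence of the appropriate small monoidal subcategories. Since $\cX$ is smooth, $L\cX$ is quasi-smooth and $D^b(Coh(L\cX))$ sits inside $QCoh(L\cX)$ as the subcategory of compact/coherent objects; correspondingly, $\cT r(D^b(Coh(\cX)))$ is identified with the coherent part of $\cT r(QCoh(\cX))$. One needs to check that the BFN functor preserves these small pieces and that the monoidal structures match, yielding a monoidal equivalence
$$
D^b(Coh(L\cX)) \;\simeq\; \cT r(D^b(Coh(\cX))).
$$
Passing to Grothendieck groups on both sides gives
$$
G_0(L\cX, \otimes^{str}) \;\cong\; K_0(\cT r(D^b(Coh(\cX)))),
$$
and composing with the isomorphism $\iota_* : K^{virt}(\cX) \stackrel{\cong}{\to} G_0(L\cX, \otimes^{str})$ supplied by Theorem \ref{main:loop} produces the ring isomorphism asserted in Theorem \ref{main:drinfeld}.

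The main obstacle is the bookkeeping step of verifying that the BFN equivalence truly restricts to the coherent bounded subcategories as a monoidal functor. In particular, one must identify the compact objects of the derived Drinfeld center $\cT r(QCoh(\cX))$ with $\cT r(D^b(Coh(\cX)))$ (using that $\cX$ is smooth and perfect, so that $D^b(Coh(\cX))$ agrees with the subcategory of perfect complexes and is self-dual as a dg category), and match the $\otimes^{str}$ product with the Drinfeld center product at the level of small categories. Once this identification is in place, the rest of the argument is the purely formal passage to $K_0$.
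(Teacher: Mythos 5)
Your proposal matches the paper's treatment: Theorem \ref{main:drinfeld} is obtained there precisely as a reformulation of Theorem \ref{main:loop} via the BFN equivalence $(\cZ(Coh(\cX)), \otimes^{conv}) \cong (Coh(L\cX), \otimes^{str})$ recorded in Section 2.3, followed by passage to Grothendieck groups. The only caution concerns your bookkeeping step: since $L\cX$ is quasi-smooth but not smooth, $D^b(Coh(L\cX))$ is strictly larger than the subcategory $\Perf(L\cX)$ of compact objects of $\cQ Coh(L\cX)$, so the restriction of the BFN equivalence must be phrased in terms of coherent objects rather than compact ones, as the paper does.
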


This result has several useful  consequences, we list some below:
\begin{itemize}
\item As $K^{virt}(\cX)$ can be realized as the Grothendieck group of a braided monoidal category, it  carries a natural structure of 
$\lambda$-ring. This recovers results of \cite{EJK2}.
\item The prescription in \cite{EJK2} gives a definition of virtual orbifold cohomology for global quotient DM stacks. By setting $K^{virt}(\cX):=G_0(L \cX, \otimes^{str})$ we obtain a definition of virtual orbifold cohomology that applies to all  DM stacks with finite stabilizers (and in fact, to a very large class of  derived $\infty$-stacks).

\item Since $*^{virt}$ lifts to a tensor  product on $D^b(Coh(L \cX))$ it induces a  multiplicative structure on the full G-theory spectrum of $L \cX$, which is equivalent to the K-theory spectrum of $I \cX$, $G_*(L \cX) \cong K_*(I \cX)$.\footnote{Recall that if $\cX$ is a derived stack, we refer to the spectrum $K_*(\Perf(\cX))$ as the (algebraic) K-theory of $\cX$, and to $K_*(D^b(Coh(\cX)))$ as the G-theory of $\cX$. The equivalence $G_*(L \cX) \cong K_*(I \cX)$ comes from Barwick's Theorem of the Heart \cite{Ba}.} This is a much richer invariant than the virtual orbifold K-theory, which can be recovered by taking $\pi_0$, $K^{virt}(\cX) = \pi_0(K_*(I \cX))$. 
Also in this way we achieve a fully motivic definition of the virtual product, which is therefore not confined to K-theory but extends to any linear invariant of stable categories: for instance, our result enables the definition of virtual orbifold products on Hochschild homology 
and negative cyclic homology.  
\end{itemize}

Our initial motivations came 
from a proposal of Hinich. In \cite{Hi} Hinich proves  the that abelian category of coherent sheaves over $I \cX$  is isomorphic to the (\emph{underived})  Drinfeld center \cite{JS} of the abelian tensor category of coherent sheaves over $\cX$, and notes that $Coh(I \cX)$ inherits from this equivalence an interesting braided tensor product. He then asks  whether this would give an alternative description of the orbifold product of \cite{JKK} on $K_0(I \cX)$. Theorem \ref{main:drinfeld} implies that the answer to Hinich's question is negative: the tensor product of the Drinfeld center of $Coh(\cX)$ does not descend to the orbifold product on  $K_0(I \cX)$, but rather to the \emph{virtual orbifold product}. 
The two are almost always different: notable exceptions include the case of classifying stacks of finite groups and of smooth schemes. %($[*/G]$, where $G$ is a finite group, 
%and the case when $\cX = X$ is a scheme. 
For classifying stacks of finite groups %We remark that for 
%$[*/G]$ 
a different but related connection between orbifold products and Drinfeld doubles was studied by Kaufmann and Pham \cite{KP}. 
Our work was also inspired by ideas of Manin and To\"en on  categorification of quantum cohomology, see \cite{Ma} and \cite{To1} Section 4.4 (6). Some recent work in this direction can be found in the preprint of To\"en \cite{To2}. \\ \\

{\bf Acknowledgments: } 
We thank Ralph Kaufmann and Timo Sch\"urg for inspiring conversations in the early stages of this  project. We are grateful to Kai Behrend and David Carchedi for useful  discussions and for answering our many questions.

\section{Preliminaries} 

\subsection{$\infty$-categories}
It is well known that triangulated categories are not well adapted to capture many important 
functoriality properties of categories of sheaves. Fortunately various possible ways to obviate these deficiencies are now available:  in the early $90$-s,  Bondal and Kapranov \cite{BoK}  
proposed the formalism of quasi-triangulated dg categories as a better behaved replacement of ordinary triangulated category theory for the purposes of algebraic geometry. In this paper we work with a different enhancement of triangulated categories, that is provided by  stable $\infty$-categories. 
Our model of choice of $\infty$-categories are quasi-categories, which were developed by Joyal \cite{Jo} and have been extensively investigated by Lurie \cite{Lu}. For brevity we always refer to quasi-categories simply as  $\infty$-categories, 
the reader should consult \cite{Lu} 
for basic notations and definitions.   
We remark that in this paper we are exclusively interested in characteristic $0$ applications: under this assumption the theory of stable $\infty$-categories is equivalent to the theory of triangulated dg categories \cite{Co}.  

\subsection{Derived algebraic geometry}
\label{sec:dag}
 In the following it will be often important to consider spaces of maps from simplicial sets to algebraic geometric objects such as schemes and DM stacks. 
Derived algebraic geometry provides a language in which to make sense of these constructions. 
We work over a ground field $\kappa$ of characteristic $0$.
A careful 
definition 
of derived stacks can be found in \cite{To1}. For an agile exposition of this material see \cite{BFN}  
Section 2.3, which employs  as we do the language of $\infty$-categories. Let  $d\cA lg_\kappa$ be the $\infty$-category of simplicial commutative 
$\kappa$-algebras. The opposite category of $d \cA lg_\kappa$, which we denote $d \cA ff_\kappa$, is a site with the \'etale topology (see Section 2.3 of \cite{BFN}): we denote it $(d \cA ff_\kappa)_{\acute{e}t}$. 
Derived stacks are sheaves over 
$(d \cA ff_\kappa)_{\acute{e}t}$ 
with 
values in the $\infty$-category of topological spaces, $\cT op$.

Derived stacks form the $\infty$-category $d\cS t_\kappa$.  Some important examples of derived stacks are: 

\begin{itemize}
\item ordinary schemes and stacks of groupoids (in the following, we will refer to these simply as schemes and stacks),

\item topological spaces (that are viewed as constant sheaves of spaces), and more generally  underived \emph{higher} stacks   \cite{To1},

\item derived affine schemes, that is objects of $d \cA ff_\kappa$.
\end{itemize} 
 There exists a truncation functor $t_0(-)$ that maps derived stacks to underived  stacks: if $F$ is a derived stack, there is a  canonical closed embedding $t_0(F) \rightarrow F$. 
All limits and colimits of derived stacks are taken in the $\infty$-category $d\cS t_\kappa$, that is, they are always \emph{derived}. We point out that this also applies to limits and colimits of schemes: for instance, if $X \rightarrow Y \leftarrow Z$ is a diagram of schemes, $X \times_Y Z$ denotes the \emph{derived} fiber product of $X$ and $Z$, which in general differs  from the ordinary fiber product. The ordinary fiber product can be recovered as  
$t_0(X \times_Y Z)$.

It is often useful to probe the  geometry of derived stacks by mapping spaces into them. An especially important construction of this kind is the derived loop stack. As in ordinary topology,  if $\cX$ is in $d \cS t_\kappa$, we define the loop stack of $\cX$ to be the space of maps from $S^1$ into $\cX$, $\cX^{S^1}$. We denote the loop stack $L \cX$.  
Recall that $S^1$ can be realized as  the colimit of the diagram  
$* \leftarrow (* \coprod *) \rightarrow *$ in $\cT op$: 
this captures the fact that a circle can be obtained by joining two intervals at their endpoints. 
As a consequence, $L \cX$ is equivalent to the fiber product of the diagonal $\cX \stackrel{\Delta}{\rightarrow} \cX \times \cX$ with itself, 
$
L\cX \cong \cX \times_{\cX \times \cX} \cX.$ Note that even in the case of ordinary schemes, loop stacks have non-trivial derived structure: in fact if $\cX$ is a smooth scheme $L \cX$ is equivalent to the total space of the \emph{shifted} tangent bundle $T\cX[-1]$, this is a form of the Hochschild-Konstant-Rosenberg isomorphism. 
 \begin{proposition}
Let $\cX$ be a DM stack, then $t_0(L \cX)$ is isomorphic to the inertia stack $I \cX$. Further, if $\cX$ is the global quotient of a smooth scheme by a finite group, there is an equivalence $L \cX \cong T I \cX[-1]$  (\cite{ACH} Theorem 4.9).
 \end{proposition}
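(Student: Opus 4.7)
The first assertion is essentially formal. By the discussion preceding the proposition, $L\cX \simeq \cX \times^h_{\cX \times \cX} \cX$, and the truncation functor $t_0$ sends derived fiber products of stacks to the corresponding classical fiber products. The classical self-intersection of the diagonal $\cX \to \cX \times \cX$ is, by definition, the inertia stack $I\cX$, whose $R$-points parametrize pairs $(x,\alpha)$ with $x \in \cX(R)$ and $\alpha \in \mathrm{Aut}(x)$.

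For the second assertion, I would exploit the presentation $\cX = [X/G]$ with $X$ smooth and $G$ finite. By $G$-equivariant descent applied to the Galois atlas $X \to [X/G]$, the loop stack $L\cX$ is the quotient (with $G$ acting on the indexing set by conjugation) of the disjoint union over $g \in G$ of the twisted derived loop stacks
$$ L_g X \;:=\; X \times^h_{X \times X} X, $$
where one map is the diagonal $\Delta$ and the other is the graph $\Gamma_g$. Since we work in characteristic $0$ and each $g$ has finite order, the fixed locus $X^g$ is a smooth closed subscheme of $X$ (by Cartan's averaging / linearization lemma), and $\Delta$ and $\Gamma_g$ meet cleanly along $X^g$.

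The core local computation is then at the level of tangent complexes. At a point $x \in X^g$, the tangent complex of $L_g X$ is the fiber of
$$ T_x X \oplus T_x X \xrightarrow{(v,w)\,\mapsto\,(v-w,\,v-g_\ast w)} T_x X \oplus T_x X. $$
Splitting $T_x X = T_x X^g \oplus V$, where $g-1$ acts as an isomorphism on the non-fixed isotypic complement $V$ (this is where finiteness of $G$ and characteristic $0$ enter), one reads off that both the kernel and the cokernel of the above map are canonically $T_x X^g$, sitting in cohomological degrees $0$ and $-1$ respectively. This matches the tangent complex of the shifted tangent bundle $T X^g[-1]$ along its zero section, and identifies $I\cX$ with the classical truncation of $L\cX$ under this description.

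The main obstacle, and the actual content of \cite{ACH}, is to promote this infinitesimal identification to a global equivalence $L_g X \simeq T X^g[-1]$ of derived schemes. I would proceed by noting that the closed inclusion $X^g \hookrightarrow L_g X$ furnishes a section that plays the role of the zero section, and then invoking an HKR-type structure theorem for derived intersections of smooth subvarieties meeting cleanly along a smooth locus of the expected codimension (alternatively, via an explicit exponential/linearization argument in local coordinates around each fixed point, using finiteness of $G$ to integrate). Reassembling these equivalences $G$-equivariantly and passing to the quotient by conjugation then yields $L\cX \simeq T I\cX[-1]$, once one identifies $\coprod_{[g]}[X^g/C(g)]$ with $I\cX$.
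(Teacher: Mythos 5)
The paper does not actually prove this proposition: the first assertion is treated as a standard consequence of the compatibility of $t_0$ with fiber products (the same fact is invoked explicitly later, in the corollary identifying $t_0(P\cX)$ with $I^2\cX$), and the second assertion is simply cited from \cite{ACH}, Theorem 4.9. So your sketch is doing more work than the paper itself. Your first paragraph matches the paper's implicit argument exactly. Your decomposition $L\cX \cong [\coprod_g L_gX/G]$ with $G$ permuting the components by conjugation is also consistent with the paper's own Lemma on the presentation $L\cX \cong [L_GX/G]$, specialized to finite $G$, and the tangent-complex computation correctly identifies kernel and cokernel with $T_xX^g$ using semisimplicity of the $g$-action in characteristic $0$.

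One caution on the step you correctly flag as the crux. There is no general ``HKR-type structure theorem for derived intersections of smooth subvarieties meeting cleanly'': formality of a clean derived intersection $S\times^h_W T$ is \emph{not} automatic (this is the main point of Arinkin--C\u{a}ld\u{a}raru--Hablicsek, where a nontrivial criterion is given), and the phrase ``of the expected codimension'' is off the mark here, since the whole situation is one of excess intersection. What actually makes $\Delta\times^h_{X\times X}\Gamma_g$ split as $\mathrm{Tot}(T X^g[-1])$ is the presence of the finite-order automorphism $g$ in characteristic $0$, i.e.\ your alternative route via equivariant linearization around the fixed locus; that is precisely the content of \cite{ACH} Theorem 4.9, and it cannot be replaced by a generic clean-intersection formality statement. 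With that proviso, and with the standard identification $I\cX\cong\coprod_{[g]}[X^g/C(g)]$, your outline assembles correctly into the claimed equivalence $L\cX\cong TI\cX[-1]$.
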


We can attach to  derived stacks various categories of sheaves. Quasi-coherent sheaves on a derived stack $\cX$ form a presentable and stable symmetric  monoidal $\infty$-category, that we denote $\cQ Coh(\cX)$. The formalism of six operations: $f^*, f_*, f^!, f_!, \otimes, \cH om(-,-)$ carries over to this setting: we note that, contrary to what happens in ordinary triangulated category theory, functors are always \emph{derived}. The category of perfect complexes on $\cX$, $\Perf(\cX)$, is the subcategory of  compact objects in 
$\cQ Coh(\cX)$. 
If $\cX$ satisfies some additional assumptions (e.g. if it is a derived DM stack) $\cQ Coh(\cX)$ can be equipped with a canonical t-structure, we denote its heart $qcoh(\cX)$. Leveraging the existence of the canonical t-structure it is possible to define coherent sheaves: they are quasi-perfect and 
quasi-truncated objects in $\cQ Coh(\cX)$, see \cite{Lu8} Definition 2.6.20. Coherent sheaves form a full stable subcategory $\cC oh(\cX)$ of 
 $\cQ Coh(\cX)$. The canonical t-structure on $\cQ Coh(\cX)$ restricts to a bounded t-structure on $\mathcal{C}oh(\cX)$ with heart 
$coh(\cX)$.
\begin{proposition}[\cite{Lu} Remark 2.3.20]
\label{prop:heart}
Let $\cX$ be a derived DM stack, 
 let $t_0(\cX)$ be its underlying ordinary DM stack and let $\iota: t_0(\cX) \rightarrow \cX$ be the natural embedding. Then,
\begin{itemize}
\item 
$
\iota_*: qcoh(t_0\cX) \rightarrow qcoh(\cX)
$ is an equivalence.
\item 
$
\iota_*: coh(t_0\cX) \rightarrow coh(\cX)
$ is an equivalence.
\end{itemize}
\end{proposition}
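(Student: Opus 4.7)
The plan is to reduce to an affine computation via \'etale descent, then use the fact that a discrete module over a simplicial commutative ring automatically descends to a module over its $\pi_0$.

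Since $\cX$ is a derived DM stack, it admits an \'etale atlas by derived affine schemes $\Spec A_i \to \cX$, which restricts to an \'etale atlas $\Spec \pi_0(A_i) \to t_0(\cX)$. The assignments $\cX \mapsto qcoh(\cX)$ and $\cX \mapsto coh(\cX)$ both satisfy \'etale hyperdescent, so it suffices to treat the case $\cX = \Spec A$ with $t_0(\cX) = \Spec \pi_0(A)$ and verify that the two statements hold compatibly with the relevant pullback functors. In the affine case, $\cQ Coh(\Spec A)$ is identified with $A$-modules in the $\infty$-categorical sense, and $qcoh(\Spec A)$ is its heart, i.e. the subcategory of discrete $A$-modules (those $M$ with $\pi_i(M) = 0$ for $i \ne 0$).

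The key observation is that a discrete $A$-module structure on an abelian group $M = \pi_0(M)$ factors uniquely through the canonical surjection $A \to \pi_0(A)$. Indeed, for $n > 0$ any element of $\pi_n(A)$ would act as a map $\pi_0(M) \to \pi_n(M) = 0$, so higher homotopy acts trivially and $M$ is canonically a $\pi_0(A)$-module. This produces a natural inverse to $\iota_*$ at the level of hearts and establishes the first bullet. For the second bullet, one observes that for $M$ in $qcoh(\Spec A)$, the quasi-truncation condition is automatic (it is concentrated in a single degree), and the quasi-perfectness condition of \cite{Lu8} Def.~2.6.20 reduces, on the heart, to $M$ being finitely presented as a $\pi_0(A)$-module---exactly the condition for $M$ to lie in $coh(\Spec \pi_0(A))$. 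Hence the equivalence restricts to one between the coherent hearts.

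The main obstacle, technical rather than conceptual, is verifying that the $\infty$-categorical notion of coherent sheaf on a derived DM stack (quasi-perfect plus quasi-truncated) agrees on the heart with the classical notion of coherent sheaf on $t_0(\cX)$. This uses the DM hypothesis in an essential way to guarantee that the canonical t-structure on $\cQ Coh(\cX)$ is well-behaved with respect to \'etale localization, so that finite presentation can be checked locally. Once this is in place, gluing the affine equivalences along the atlas via descent yields the proposition; alternatively, since the statement is \cite{Lu} Rem.~2.3.20, one may cite it directly.
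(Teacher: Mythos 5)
The paper offers no proof of this proposition at all: it is quoted as a known result and attributed directly to the literature (the bracketed citation in the statement is the entirety of the paper's justification). Your proposal instead supplies the standard argument behind that citation, and the outline is correct: quasi-coherent sheaves and the canonical t-structure are étale-local, pushforward along the closed immersion $t_0(\cX) \to \cX$ commutes with flat base change, so one reduces to $\Spec A$ versus $\Spec \pi_0(A)$, where the heart of $A$-modules is the ordinary category of $\pi_0(A)$-modules and the coherence conditions match. Two points deserve slightly more care than you give them. First, the statement that higher homotopy of $A$ ``acts by zero'' on a discrete module shows the graded action factors, but the clean way to see that the full module structure factors through $\pi_0(A)$ is via the universal property of the truncation $\tau_{\leq 0}A = \pi_0(A)$ applied to a $0$-truncated module over a connective algebra; as written your argument is a plausibility check rather than a proof. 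Second, the identification of the quasi-perfectness condition on the heart with classical coherence quietly uses finiteness (Noetherian) hypotheses that hold in the paper's setting (global quotients of affine schemes by linear groups over a field) but should be flagged if you want the proposition in the generality stated. Neither issue is fatal; your sketch is a legitimate proof of a statement the paper simply imports.
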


The \emph{$K$-theory} of a derived DM stack $\cX$ is the K-theory of its category of perfect complexes, while its \emph{$G$-theory} is by definition the K-theory of $\cC oh(\cX)$,  $G_*(\cX) := K_*(\cC oh(\cX))$. We refer the reader to \cite{Ba2, BGT} for foundations on the K-theory of $\infty$-categories. Recall also that if $\cX$ is a smooth ordinary DM stack there is an equivalence $\cC oh(\cX) \cong \Perf(\cX)$, and therefore the $G$-theory and $K$-theory of $\cX$ are naturally identified.

Combined with Barwick's  ``theorem of the heart'' (which, in the setting of triangulated categories, was originally due to Neeman ), Proposition \ref{prop:heart} has the important corollary that the $G$-theory of $\cX$ and of  $t_0(\cX)$ are equivalent, see \cite{Ba} Proposition $9.2$.  

\begin{corollary}
\label{cor:heart}
There is an equivalence of spectra 
$$
\iota_*: G_*(t_0(\cX)) = K_*(\cC oh(t_0(\cX))) \stackrel{\cong}{\rightarrow} 
G_*(\cX) = K_*(\cC oh(\cX)).
$$
In particular $\iota_*: G_0(t_0(\cX)) \rightarrow G_0(\cX)$ is an isomorphism of groups that  sends the class $\sum\limits_{i=0}^\infty (-1)^ i  \pi_i(\cO_\cX) \in  G_0(t_0(\cX))$ to the class of $\cO_\cX$ in $G_0(\cX)$. 
\end{corollary}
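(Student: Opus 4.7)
The strategy is to combine Proposition \ref{prop:heart} with Barwick's theorem of the heart, which is explicitly cited in the statement. Recall that Barwick's theorem says: if $\cC$ is a stable $\infty$-category equipped with a bounded t-structure whose heart $\cC^{\heartsuit}$ is (small enough to be) an abelian category with an exact K-theory, then the inclusion $\cC^{\heartsuit} \hookrightarrow \cC$ induces an equivalence of K-theory spectra $K(\cC^{\heartsuit}) \xrightarrow{\cong} K(\cC)$.

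First I would apply Barwick's theorem twice: once to $\cC oh(t_0(\cX))$, whose canonical t-structure has heart $coh(t_0(\cX))$, and once to $\cC oh(\cX)$, whose bounded t-structure (recalled in Section \ref{sec:dag}) has heart $coh(\cX)$. This yields equivalences $K_*(coh(t_0(\cX))) \xrightarrow{\cong} G_*(t_0(\cX))$ and $K_*(coh(\cX)) \xrightarrow{\cong} G_*(\cX)$. Next, the second bullet of Proposition \ref{prop:heart} gives an equivalence of abelian categories $\iota_*: coh(t_0(\cX)) \xrightarrow{\cong} coh(\cX)$, which induces an equivalence on K-theory. Since $\iota_*$ is t-exact (it is the pushforward along a closed embedding that identifies hearts), these three equivalences assemble into a commutative square, and the bottom horizontal arrow is the $\iota_*$ of the statement. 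This establishes the first assertion.

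For the second assertion, I would observe that in the Grothendieck group of any stable $\infty$-category with a bounded t-structure, every class equals the alternating sum of the classes of the cohomology objects of any representative. Applied to $\cO_\cX \in \cC oh(\cX)$, this gives $[\cO_\cX] = \sum_{i \geq 0} (-1)^i [\pi_i(\cO_\cX)]$ in $G_0(\cX)$, where each $\pi_i(\cO_\cX)$ is naturally a coherent sheaf on $t_0(\cX)$ that is pushed to $coh(\cX)$ via $\iota_*$. Since the isomorphism $\iota_*: G_0(t_0(\cX)) \to G_0(\cX)$ constructed above sends the class of each $\pi_i(\cO_\cX)$ (viewed as an object of $coh(t_0(\cX))$) to the class of its image in $coh(\cX)$, the desired identity follows by linearity.

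The only non-routine input is Barwick's theorem of the heart, which is used as a black box; given it, the argument is essentially a formal juxtaposition of Proposition \ref{prop:heart} and the basic relationship between Grothendieck groups and cohomology sheaves. The main subtlety to verify is that the truncation functors and the canonical t-structure on $\cC oh(\cX)$ interact with $\iota_*$ in the way implicit above, namely that $\iota_*$ is t-exact when restricted to the hearts; this is guaranteed by Proposition \ref{prop:heart} since that proposition identifies the two hearts through precisely this functor.
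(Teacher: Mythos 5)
Your proposal is correct and follows the same route as the paper: the paper derives this corollary precisely by combining Proposition \ref{prop:heart} with Barwick's theorem of the heart (citing \cite{Ba} Proposition 9.2), and your dévissage argument for the class of $\cO_\cX$ as the alternating sum of its homotopy sheaves is the intended (and standard) justification of the second assertion, which the paper leaves implicit.
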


\subsection{Derived Drinfeld center and convolution tensor product}
\label{sec:ddc}
Here we review some results from  \cite{BFN} that will play a key role in the following. 
Denote $\cP r^L$ be the closed symmetric monoidal $\infty$-category of presentable $\infty$-categories (and left adjoint functors between them). 
Let $\cX$ be a \emph{perfect} derived stack in the sense of \cite{BFN} Definition 3.2. For instance, smooth DM stacks are perfect. 
As in ordinary algebra, it is possible to make sense of the Hochschild homology and cohomology of $\cQ Coh(\cX)$ as an associative algebra object in $\cP r^L$. Following \cite{BFN}, we call these respectively the derived trace and derived center of $\cQ Coh(\cX)$, and denote them $\cT r(\cQ Coh(\cX))$ and $\cZ (\cQ Coh(\cX))$. The derived center $\cZ (\cQ Coh(\cX))$ is an \emph{$\cE_2$-category} with the convolution tensor product  
$-\otimes^{conv}-$: recall that 
$\cE_2$-categories are the analogue in $\infty$-category theory of braided monoidal categories.

Let $P$ be the two-dimensional pair of pants, that is, $P$ is a genus $0$ compact surface with three boundary components. Set $P \cX := \cX^P$, and note that restriction to the boundary components gives  maps,
$$
\xymatrix{
& P \cX \ar[dl]_{p_1} \ar[d]^{p_3} \ar[dr]^{p_2} & \\
L\cX & L\cX & L\cX. 
}
$$
We extract from this diagram a non trivial tensor product on $\cQ Coh(L \cX)$, that we denote $\otimes^{str}$:   
$\cF \otimes^{str} \cG = p_{3*}(p_1^*(\cF) \otimes p_2^*(\cG))$.\footnote{The notation $\otimes^{str}$ is motivated by the connection with string topology \cite{CS}, \cite{CJ}, for a discussion of these aspects  see Section 6.1 of \cite{BFN}.} 
\begin{theorem}[\cite{BFN} Proposition 5.2, 6.3, 6.6] 
$(\cQ Coh(L \cX), \otimes^{str})$ is an $\cE_2$-category, and there is an  equivalence of $\cE_2$-categories: 
$$
(\cZ(\cQ Coh(\cX)), \otimes^{conv}) 
 \cong ( \cQ Coh(L \cX), \otimes^{str}).
$$
\end{theorem}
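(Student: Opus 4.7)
The plan is to reduce the statement to an algebraic computation about the Hochschild center of $\cA := \cQ Coh(\cX)$, regarded as a symmetric monoidal algebra object in $\cP r^L$, and then match both sides using Morita theory for perfect stacks. The key inputs are: (i) the tensor product formula for quasi-coherent sheaves, (ii) the coincidence of the derived trace and the derived center for symmetric monoidal $\infty$-categories, and (iii) a cobordism-theoretic interpretation of the pair-of-pants stack.

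First I would invoke the fundamental computational tool of \cite{BFN}: for perfect stacks $\cX$, $\cY$, $\cS$, the exterior tensor product induces an equivalence of symmetric monoidal $\infty$-categories
\[
\cQ Coh(\cX) \otimes_{\cQ Coh(\cS)} \cQ Coh(\cY) \;\stackrel{\cong}{\lr}\; \cQ Coh(\cX \times_{\cS} \cY).
\]
Since $\cX$ is perfect, so is $\cX \times \cX$. Applying this formula to the fiber product $L\cX = \cX \times_{\cX \times \cX} \cX$ yields
\[
\cQ Coh(L\cX) \;\cong\; \cA \otimes_{\cA \otimes \cA} \cA,
\]
and the right-hand side is the two-sided bar construction computing the derived trace $\cT r(\cA)$. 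Because $\cA$ is symmetric monoidal, the diagonal bimodule is self-dual, so $\cT r(\cA)$ and $\cZ(\cA)$ are canonically equivalent as $\cE_2$-objects of $\cP r^L$. Combining these two equivalences produces the underlying equivalence $\cZ(\cA) \cong \cQ Coh(L\cX)$.

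Next I would verify that the convolution tensor $\otimes^{conv}$ is intertwined with $\otimes^{str}$. Multiplication in $\cZ(\cA) \cong \cT r(\cA)$ corresponds to composition of the diagonal bimodule with itself, and by iterating the Morita equivalence this is identified with the pushforward-pullback convolution along the threefold fiber product
\[
\cX \times_{\cX \times \cX} \cX \times_{\cX \times \cX} \cX.
\]
A mapping-stack computation, modeled on the presentation of $P$ as the iterated pushout $* \leftarrow (* \coprod *) \rightarrow *$ composed with itself, shows this iterated fiber product is equivalent to $P\cX$, with the three projections to $L\cX$ recovering $p_1$, $p_2$, $p_3$. Unwinding the composition formula through these identifications gives $\cF \otimes^{conv} \cG \simeq p_{3*}(p_1^* \cF \otimes p_2^* \cG) = \cF \otimes^{str} \cG$, establishing both that $\otimes^{str}$ is monoidal and that the equivalence is monoidal.

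The $\cE_2$-enhancement then has two compatible origins. Algebraically, the Hochschild center of any $\cE_1$-algebra in a symmetric monoidal $\infty$-category carries a natural $\cE_2$-structure, endowing $\cZ(\cA)$ with one. Geometrically, the family of mapping stacks $\{\cX^{\Sigma}\}$ indexed by oriented surfaces with boundary assembles into an $\cE_2$-structure on $\cX^{S^1} = L\cX$ through the pair-of-pants cobordism $(S^1 \sqcup S^1) \hookrightarrow P \hookleftarrow S^1$, which is exactly the braiding on $\otimes^{str}$. The main obstacle I expect is upgrading the agreement of underlying tensor products to a coherent equivalence of $\cE_2$-categories: plain monoidality is a pullback-pushforward bookkeeping exercise, but matching the braidings requires comparing the algebraic (Deligne-type) $\cE_2$-structure on the center with the cobordism-theoretic one on $L\cX$. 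This comparison, which is precisely the content of Propositions~5.2, 6.3, and 6.6 of \cite{BFN}, ultimately rests on the fact that both structures are determined by the universal property of $S^1$ as a grouplike $\cE_1$-space acting on $\cX$ via maps.
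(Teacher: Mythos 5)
The paper offers no proof of this statement: it is imported wholesale from \cite{BFN} (Propositions 5.2, 6.3 and 6.6), so the only meaningful comparison is against the argument given there. Your sketch does track that argument in outline: the tensor product formula $\cQ Coh(\cX)\otimes_{\cQ Coh(\cS)}\cQ Coh(\cY)\cong \cQ Coh(\cX\times_\cS\cY)$ for perfect stacks, the resulting presentation $\cQ Coh(L\cX)\cong \cA\otimes_{\cA\otimes\cA}\cA$, and the identification of the convolution product with pull--push through $P\cX\cong L\cX\times_\cX L\cX\cong \cX\times_{\cX\times\cX}\cX\times_{\cX\times\cX}\cX$ are the right ingredients, assembled in the right order.

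There are, however, two genuine problems. First, you justify the equivalence $\cT r(\cA)\cong\cZ(\cA)$ by asserting that ``because $\cA$ is symmetric monoidal, the diagonal bimodule is self-dual.'' That is not the reason, and as a general claim it is false: for an arbitrary symmetric monoidal presentable category the center $\End_{\cA\otimes\cA}(\cA)$ and the trace $\cA\otimes_{\cA\otimes\cA}\cA$ need not coincide. What makes the diagonal bimodule self-dual here is the \emph{perfection} hypothesis on $\cX$ (compact generation of $\cQ Coh(\cX)$ together with affineness of the diagonal), which is precisely what \cite{BFN} use to establish self-duality of $\cQ Coh(\cX)$ as a $\cQ Coh(\cX\times\cX)$-module; the symmetric monoidal structure on $\cA$ plays no role in that step. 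Second, the step you correctly single out as the hard one --- upgrading agreement of the underlying tensor products to a coherent equivalence of $\cE_2$-categories, i.e.\ matching the Deligne-type $\cE_2$-structure on the center with the cobordism-theoretic one on $L\cX$ --- is deferred to ``precisely the content of Propositions 5.2, 6.3, and 6.6 of \cite{BFN},'' which are the very propositions whose content is being proved. As a proof this is circular; as a reading of the literature it is accurate, and no worse than what the paper itself does, but you should state explicitly that your argument only establishes the equivalence of underlying monoidal categories and that the $\cE_2$-coherence is being cited rather than proved.
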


Now assume that $\cX$ is a smooth DM stack.  Note that $Coh(\cX)$ is an associative algebra object in the closed symmetric monoidal $\infty$-category of small, stable and split closed $\infty$-categories. Also, the  tensor product $\otimes^{str}$ restricts to a $\cE_2$-structure on 
$Coh(L \cX)$.

\begin{corollary}
There is an equivalence of $\cE_2$-categories:  
$$
(\cZ(Coh(\cX)), \otimes^{conv}) 
 \cong (Coh(L \cX), \otimes^{str}).
$$
\end{corollary}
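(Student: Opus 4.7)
The plan is to deduce this corollary directly from the preceding theorem of \cite{BFN} by restricting the equivalence to coherent subcategories. Since $\cX$ is a smooth DM stack, we have $Coh(\cX) \cong \Perf(\cX)$, so $Coh(\cX)$ is exactly the subcategory of compact objects in $\cQ Coh(\cX)$, and $\cQ Coh(\cX) \simeq \mathrm{Ind}(Coh(\cX))$. In particular, $Coh(\cX)$ is naturally a (symmetric monoidal) algebra object in the $\infty$-category of small stable idempotent-complete $\infty$-categories, so the derived center $\cZ(Coh(\cX))$ makes sense and carries a convolution $\cE_2$-structure.

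First I would use the general fact that the formation of the derived Drinfeld center is compatible with Ind-completion for compactly generated algebras: there is a natural embedding $\cZ(Coh(\cX)) \hookrightarrow \cZ(\cQ Coh(\cX))$ whose essential image consists of those center objects whose associated integral transform preserves compact objects. Under the equivalence $\cZ(\cQ Coh(\cX)) \simeq \cQ Coh(L\cX)$ of the previous theorem, I would identify this subcategory with $Coh(L\cX)$. The identification uses the integral-transform description of the BFN equivalence: an object $\cF \in \cQ Coh(L\cX)$ acts on $\cQ Coh(\cX)$ by convolution, and this action preserves $\Perf(\cX) = Coh(\cX)$ precisely when $\cF$ is coherent. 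To check that ``coherent'' is the right finiteness condition on the $L\cX$-side one can either argue directly or reduce via Proposition \ref{prop:heart} and the identification $L\cX \simeq T I\cX[-1]$ (in the global-quotient case) to the well-understood category $Coh(I\cX)$.

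Next I would verify that the string convolution $\otimes^{str}$ actually restricts from $\cQ Coh(L\cX)$ to $Coh(L\cX)$, so that the $\cE_2$-structure is inherited. Recall that $\cF \otimes^{str} \cG = p_{3*}(p_1^* \cF \otimes p_2^* \cG)$ for the three boundary maps $p_i : P\cX \to L\cX$ from the pair-of-pants stack. The pullbacks $p_1^*$, $p_2^*$ preserve coherence because they are pullbacks along morphisms with good finiteness properties arising from the smoothness of $\cX$, and $p_{3*}$ preserves coherence because $p_3$ is proper (a consequence of $\cX$ being DM, so that the mapping stack from a closed surface is proper over the evaluation targets). Assembling these preservation properties promotes the equivalence from an equivalence of $\cE_2$-categories at the Ind-level to one at the level of small stable categories.

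The main obstacle I expect is the verification that the embedding $\cZ(Coh(\cX)) \hookrightarrow \cZ(\cQ Coh(\cX))$ is compatible with the BFN equivalence in an $\cE_2$-structured way, together with the finiteness check for $p_{3*}$; both are essentially bookkeeping about how the center construction, which is defined via Hochschild cohomology / $\cQ Coh(\cX)$-bimodules, interacts with passage from presentable to small stable categories. Everything else is formal given the previous theorem.
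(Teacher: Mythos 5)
Your strategy---restricting the BFN equivalence $\cZ(\cQ Coh(\cX)) \simeq \cQ Coh(L\cX)$ to the small, split-closed subcategories of coherent objects, by matching coherent kernels on $L\cX$ with compact-preserving central functors---is essentially the argument the paper intends; the paper offers no proof beyond the two sentences preceding the corollary, so your proposal is if anything more detailed, and its skeleton is sound. Two small repairs: the properness of $p_3$ should be deduced from finiteness of the inertia (the truncation of $p_3$ is $\mu: I^2\cX \to I\cX$, which is finite because $\cX$ is a separated DM stack, and properness is insensitive to the derived structure), not from $P$ being a closed surface (it is not one); and the suggested shortcut via Proposition \ref{prop:heart} does not identify $Coh(L\cX)$ with $Coh(I\cX)$ (that proposition only identifies the hearts, hence the G-theories---the bounded coherent categories themselves differ), so the claim that ``coherent'' is exactly the condition on a kernel $\cF \in \cQ Coh(L\cX)$ for its transform $M \mapsto ev_*(\cF \otimes ev^*M)$ to preserve $\Perf(\cX)=Coh(\cX)$ should instead be checked directly using that $ev: L\cX \to \cX$ is affine with finite truncation $I\cX \to \cX$.
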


\section{A proof of the main theorem for classifying  stacks and schemes}

In this section we work out the two simplest examples of our main theorem: we prove it for DM stacks of the form $BG = [*/G]$, where $G$ is a  finite group, and for smooth schemes.  
In the case of $BG$, a direct proof 
follows from results scattered in the literature: we thought it might be useful to sketch it here. 
Although the proof for schemes does not differ in any essential way from the general argument, it has the advantage that it can be entirely carried out leveraging simple geometric properties of mapping spaces. A similar geometric  treatment of the general case is also possible but is more intricate, and we will not pursue it: however 
those geometric ideas motivate the complete proof of Theorem \ref{main:loop} that we will give in Section \ref{sec:proof}, 
 and might contribute to clarify it.
  
\subsection{Classifiying stacks of finite groups}

Let $G$ be a finite group and let 
$\cX = [*/G]$ be the classifying stack of $G$. 

\begin{remark}
Note that for classifying stacks of finite groups $K^{virt}(\cX)$ 
is equal to $K^{orb}(\cX)$ the \emph{orbifold K-theory} of 
$\cX$ defined in \cite{JKK}. 
This is an immediate consequence of the definitions,  see \cite{EJK2} Section 4.3.  
\end{remark}

\begin{proposition}
There is an isomorphism 
$
K^{virt}(\cX) \cong G_0(L \cX, \otimes^{str}).
$
\end{proposition}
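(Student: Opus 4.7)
The plan is to reduce both sides to explicit combinatorial descriptions and match them with the well-known Drinfeld-double-style product on representation rings.

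First, I would compute $L\cX$. For $\cX=BG$ with $G$ finite, the tangent complex vanishes, so $L\cX = \cX\times_{\cX\times\cX}\cX$ has no nontrivial derived structure and coincides with its underlying classical stack $t_0(L\cX) = I\cX = [G/G]$, where $G$ acts on itself by conjugation. Thus $G_0(L\cX) = K_0([G/G]) = \bigoplus_{[g]} R(Z_G(g))$, matching, as an abelian group, the underlying module of $K^{virt}(BG) = K^{orb}(BG)$ (using the remark just before the proposition, which identifies $K^{virt}$ with the Jarvis--Kaufmann--Kimura orbifold K-theory in this case). So only the ring structure needs to be compared.

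Second, I would identify the pair-of-pants stack $P\cX = BG^P$. Since $BG$ is an Eilenberg--MacLane space $K(G,1)$ and $\pi_1(P)$ is the free group $F_2 = \langle a,b\rangle$, the mapping stack is $P\cX \cong [\mathrm{Hom}(F_2,G)/G] = [(G\times G)/G]$ with diagonal conjugation action. Under this identification, the three boundary inclusions $S^1\hookrightarrow P$ pick out the loops $a$, $b$ and $(ab)^{-1}$ in $\pi_1(P)$, so that the structure maps of the correspondence become
\[
p_1,p_2,p_3\colon [(G\times G)/G] \lr [G/G],\qquad (a,b)\mapsto a,\ b,\ ab
\]
respectively.

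Third, I would unpack $\otimes^{str}$ on $G_0(L\cX)$. A class $[V]\in R(Z_G(g))$ corresponds to a $G$-equivariant sheaf on the $[g]$-component of $[G/G]$. The formula $\cF\otimes^{str}\cG = p_{3*}(p_1^*\cF\otimes p_2^*\cG)$ combined with base change over the diagram above expresses the product of classes supported on conjugacy classes $[g]$ and $[h]$ as a sum, over $G$-orbits of pairs $(g',h')\in[g]\times[h]$ with $g'h'\in[k]$, of $\mathrm{Ind}_{Z_G(g')\cap Z_G(h')}^{Z_G(k)} \bigl(\mathrm{Res}(V)\otimes\mathrm{Res}(W)\bigr)$. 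This is exactly the multiplication of the untwisted stringy product on $K^{orb}(BG)$ as spelled out in \cite{JKK} (and recovered as a special case of \cite{EJK2} Section 4.3 for $G$ finite), since in this zero-dimensional setting the obstruction/excess bundle on the double inertia stack $I^2\cX$ that distinguishes the different inertial products is trivial.

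The main obstacle, modest as it is here, is purely a bookkeeping issue: matching the conventions for orientations of the three boundary circles of $P$ and for basepoints/presentations of $\pi_1(P)$ between \cite{BFN} and \cite{JKK}, and verifying that the base change squares for computing $p_{3*}p_1^*p_2^*$ are all transverse (which they are, since everything is a finite \'etale quotient by $G$). Once the correspondence $[G/G]\leftarrow [(G\times G)/G]\to [G/G]^{\times 2}$ is identified with the one appearing in the JKK construction for $BG$, the equality of the two ring structures on $\bigoplus_{[g]} R(Z_G(g))$ is immediate, and the proposition follows.
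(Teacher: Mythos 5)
Your argument is correct, but it takes a genuinely different route from the paper's. The paper's proof is an assembly of results from the literature: it observes that the diagonal of $BG$ is flat so that $L\cX \cong I\cX$ with no derived structure, identifies $(coh(I\cX),\otimes^{conv})$ with the underived Drinfeld center of $Rep(G)$ following Hinich, i.e.\ with the braided category of $D(\kappa[G])$-modules, and then invokes Kaufmann--Pham's theorem that $K_0$ of that category is isomorphic as a ring to $K^{orb}(BG)=K^{virt}(BG)$. You instead compute everything by hand: you identify $P\cX\cong[(G\times G)/G]$ with its three evaluation maps $(a,b)\mapsto a,\,b,\,ab$, unwind $p_{3*}(p_1^*(-)\otimes p_2^*(-))$ into the induction--restriction convolution on $\bigoplus_{[g]}R(Z_G(g))$, and note that the obstruction/excess classes vanish because $T_X=0$, so all inertial products on $BG$ degenerate to this convolution. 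In effect you prove the Kaufmann--Pham comparison directly rather than citing it, and your computation is a zero-dimensional rehearsal of the paper's general argument in Section 5 (correspondence diagram plus excess-bundle bookkeeping, with the excess bundle trivial here); what you lose is the explicit contact with the Drinfeld double, which is the thematic point of this example, and what you gain is self-containedness. One small streamlining: the identification $K^{virt}(BG)=K^{orb}(BG)$ that you borrow from the preceding remark and the triviality of the obstruction bundle that you invoke separately are the same observation, namely that the class $\cB$ in the definition of the virtual orbifold product vanishes for $X=*$, so $\cR=\lambda_{-1}(0)=1$; stating this once would tighten the argument.
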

\begin{proof}
We sketch a proof of this fact that draws from different results available in the literature, we leave some details to the reader. Note that since $\cX$ is isomorphic to  $[*/G]$ and $G$ is finite, the diagonal map $\Delta: \cX \rightarrow \cX \times \cX$ is flat. As a consequence there in an equivalence $\cX \times_{\cX \times \cX} \cX \cong t_0(\cX \times_{\cX \times \cX} \cX)$: that is, the loop stack $L \cX$ is equivalent to the inertia stack $I \cX$. Further the convolution tensor product $\otimes^{conv}$ on 
$\cZ (Coh(\cX)) \cong Coh(I \cX)$, restricts to an exact tensor product on $coh(I \cX)$. 

Denote $Rep(G)$ be the abelian monoidal category of representations of $G$, and note that there is a natural isomorphism $Rep(G) \cong coh(\cX)$. The tensor product $\otimes^{conv}$ on $coh(I \cX)$ coincides with the one considered  in \cite{CJ, Hi}: it is the braided tensor product on 
$coh(I \cX)$ that comes from 
the identification between $coh(I \cX)$ and the (\emph{underived}, as defined in \cite{CJ}) Drinfeld center of the monoidal abelian category $coh(\cX) \cong Rep(G)$. Let us unpack this observation a bit more.

If $D(\kappa[G])$ is the Drinfeld double of the group algebra of G, its abelian category of representations $D(\kappa[G])-mod$ is equipped with a braided monoidal structure. The discussion in the previous paragraph shows that there is an equivalence of braided tensor categories:  
$$
(Coh(I \cX), \otimes^{conv}) \cong D(\kappa[G])-mod.  
$$
In particular, there is an isomorphism of rings 
$K_0(L\cX, \otimes^{str}) \cong K_0(D(\kappa[G])-mod)$.  Thus the claim would follow if could prove that  $K_0(D(\kappa[G])-mod)$ is isomorphic to  
$K^{virt}(\cX) \cong K^{orb}(\cX)$: this has been established by Kaufmann and Pham, see \cite{KP} Theorem 3.13.  
\end{proof}

\subsection{Smooth schemes}

Let $X$ be a smooth scheme.  
Then $I X = X$ and all the inertial products in $K_0(X)$ defined in \cite{EJK2} coincide with the ordinary product in the  K-theory of $X$. 
Denote $\otimes$ the ordinary symmetric tensor product on $\cQ Coh(X)$. In this section we prove that  $\iota_* : K^{virt}(X) = K_0(X, \otimes) \rightarrow  G_0(LX, \otimes^{str})$ is an isomorphism of rings. Note that by Proposition \ref{prop:heart} the map $\iota_*$ is a group isomorphism.  
The following proposition shows that $\iota_*$ is also compatible 
with the product structures. This implies Theorem \ref{main:loop} for smooth schemes. 
\begin{proposition}
\label{prop:schemes}
Let $\cF$ and $\cG$ be in $\cQ Coh(X)$. 
Then there is a natural equivalence: 
$$
(\iota_*\cF) \otimes^{str} (\iota_*\cG) \cong \iota_*(\cF \otimes \cG).
$$
\end{proposition}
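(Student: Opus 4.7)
The plan is to exploit two geometric facts about mapping spaces that are special to smooth schemes: first, that the evaluation map $\pi: LX \to X$ at a basepoint of $S^1$ is a retraction of $\iota$, so $X$ sits inside $LX$ as the locus of ``constant loops''; and second, that the pair of pants $P$ deformation retracts onto $S^1 \vee S^1$, so since $X$ is truncated one has $PX \simeq LX \times_X LX$ in $d\cS t_\kappa$. Under this identification $p_1$ and $p_2$ are the two projections, while $p_3$ is the ``join'' map; all three maps cover the common basepoint projection to $X$.

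The key geometric input is then the identification of the locus of pairs of constant loops inside $PX$. Concretely, I would form the derived fiber product
$$
\xymatrix{
P_{12}X \ar[r]^{\Delta'} \ar[d]_{\iota_P} & X\times X \ar[d]^{\iota\times\iota} \\
PX \ar[r]^{p_1\times p_2} & LX\times LX
}
$$
and compute it as $P_{12}X \cong X\times_X X \cong X$, with $\Delta'$ being the diagonal $\Delta_X : X\to X\times X$. The essential observation is that joining two constant loops at the same basepoint produces a constant loop at that basepoint, so the restriction of $p_3$ to $P_{12}X = X$ is precisely $\iota: X\to LX$, i.e.\ $p_3 \circ \iota_P = \iota$.

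With these pieces in place, the proof is a short diagram chase. Starting from the definition,
$$
(\iota_*\cF)\otimes^{str}(\iota_*\cG) \;=\; p_{3*}\bigl((p_1\times p_2)^*((\iota_*\cF)\boxtimes(\iota_*\cG))\bigr),
$$
I would apply the external product formula $(\iota_*\cF)\boxtimes(\iota_*\cG)\cong (\iota\times\iota)_*(\cF\boxtimes\cG)$, then base change along the derived cartesian square above to obtain $(p_1\times p_2)^*(\iota\times\iota)_* \cong (\iota_P)_*\Delta_X^*$. Since $\Delta_X^*(\cF\boxtimes\cG)\cong \cF\otimes\cG$ and $p_3\circ\iota_P=\iota$, the right-hand side collapses to $\iota_*(\cF\otimes\cG)$, as desired.

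The main technical obstacle I anticipate is justifying that the square above is genuinely a derived pullback (so that base change applies without correction terms) and that the retraction $PX \simeq LX\times_X LX$ holds in the derived category of stacks, not merely classically; these rely on the smoothness of $X$ and on $X$ being truncated, together with the standard computation of $\cX^P$ as a derived limit of copies of $\cX$ indexed by a cell decomposition of $P$. Once these identifications are set up, every remaining step is a routine invocation of proper base change and of the Künneth formula for $\cQ Coh$, both of which are available in the derived setting of \cite{BFN}.
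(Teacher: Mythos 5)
Your proof is correct and follows essentially the same route as the paper's: both arguments hinge on identifying the derived locus of pairs of constant loops in $PX=X^P$ with $X$ (your $P_{12}X$ is precisely the paper's $X^{P_{12}}$), on the observation that $p_3$ restricts there to $\iota$, and on derived base change plus the projection formula --- your K\"unneth-plus-one-base-change step is exactly the content of the paper's Lemma \ref{lem:3} combined with its two base-change squares. One minor correction: the equivalence $X^P \simeq LX \times_X LX$ requires neither smoothness nor truncatedness of $X$; it holds for any derived stack, since $P \simeq S^1 \vee S^1$ in $\cT op$ and $\cX^{(-)}$ carries colimits to limits.
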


\begin{corollary}
There is an isomorphism $\iota_*: K^{virt}(X) = K_0(X, \otimes) \rightarrow  G_0(LX, \otimes^{str})$. 
\end{corollary}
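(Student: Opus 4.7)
The plan is to deduce the corollary directly from the two ingredients already assembled in this subsection: Corollary \ref{cor:heart} provides the underlying group isomorphism, and Proposition \ref{prop:schemes} promotes it to a ring isomorphism.

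First I would verify that $\iota_*$ is a bijection of abelian groups. For a smooth scheme $X$ the inertia stack equals $X$ itself, and the general identification $t_0(L\cX) \cong I\cX$ specializes to $t_0(LX) \cong X$; the map $\iota: X \to LX$ is precisely the canonical closed embedding of the classical truncation. Applying Corollary \ref{cor:heart} to the derived stack $LX$ then yields an isomorphism of groups
$$
\iota_*: G_0(X) = G_0(t_0(LX)) \stackrel{\cong}{\longrightarrow} G_0(LX).
$$
Because $X$ is a smooth scheme, coherent sheaves and perfect complexes agree, so $G_0(X) = K_0(X)$. By the remark at the beginning of this subsection $K^{virt}(X) = K_0(X, \otimes)$ as an abelian group, so $\iota_*$ supplies the desired bijection on underlying groups.

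Next I would upgrade this to a ring map using Proposition \ref{prop:schemes}. That proposition gives a natural equivalence $(\iota_*\cF) \otimes^{str} (\iota_*\cG) \cong \iota_*(\cF \otimes \cG)$ for $\cF, \cG \in \cQ Coh(X)$. Restricting to $\cC oh(X) = \Perf(X)$ and passing to Grothendieck classes yields the identity
$$
[\iota_*\cF] \cdot^{str} [\iota_*\cG] \;=\; [\iota_*(\cF \otimes \cG)]
$$
in $G_0(LX,\otimes^{str})$, so $\iota_*$ intertwines the product $\otimes$ on $K_0(X)$ with $\otimes^{str}$ on $G_0(LX)$. Combined with the previous step this establishes the ring isomorphism.

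The only real bookkeeping point — and the place where I would be most careful — is ensuring that the map $\iota$ appearing in Corollary \ref{cor:heart} is literally the same pushforward used in Proposition \ref{prop:schemes}, so that the additive and multiplicative statements refer to the same function. Once this is confirmed (it is, since both refer to the canonical closed embedding $t_0(LX) \hookrightarrow LX$), no correction terms of the form $\sum_i (-1)^i \pi_i(\cO_{LX})$ need enter: they are absorbed into the definition of the class $[\iota_*\cF]$ by Corollary \ref{cor:heart}, and the equivalence of Proposition \ref{prop:schemes} is compatible with this convention. Thus the corollary follows immediately by combining the two.
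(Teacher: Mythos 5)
Your proposal is correct and follows essentially the same route as the paper: the corollary is obtained there precisely by combining the group isomorphism $\iota_*\colon K_0(X)\to G_0(LX)$ coming from Proposition \ref{prop:heart} (equivalently Corollary \ref{cor:heart}, using $t_0(LX)\cong X$ and $\cC oh(X)\cong \Perf(X)$ for $X$ smooth) with the multiplicative compatibility $(\iota_*\cF)\otimes^{str}(\iota_*\cG)\cong \iota_*(\cF\otimes\cG)$ of Proposition \ref{prop:schemes}, passed to Grothendieck classes. Your bookkeeping remark that both ingredients refer to the same canonical closed embedding $t_0(LX)\hookrightarrow LX$ is exactly the (implicit) glue in the paper's argument as well.
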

 
Before proceeding with the proof of  Proposition \ref{prop:schemes}, we make some preliminary observations that take place in 
$\cT op$. Denote $D$ the closed disc, and let $P$ be the pair of pants. It is useful to model $P$ as the complement of three non-intersecting open discs, $D_1$ $D_2$ and $D_3$, in the 2-sphere $S^2$. Denote $b_1, b_2,  b_3 : S^1 \rightarrow P$ the inclusions given by the identification $S^1 = \partial D_i$. Let $\{i, j, k \} = \{1, 2, 3\}$, and denote $P_i = S^2 - (D_j \cup D_k)$, and $P_{i, j} = S^2 - D_k$. Note that in $\cT op$ we have equivalences $P_i \cong S^1$, $P_{i,j} \cong D$.

\begin{lemma}
\label{lem:1}
The following diagrams of inclusions,

\xymatrix{
&&&&& P \ar[d] \ar[r] & P_i \ar[d] && S^1 \ar[d]_{b_i} \ar[r] & D_i \ar[d]\\ 
&&&&& P_j \ar[r] & P_{i,j}, && P \ar[r] & P_i,}

are push-outs in $\cT op$.
\end{lemma}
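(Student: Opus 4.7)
The plan is to verify each diagram first as an ordinary (strict) pushout of topological spaces by giving explicit set-theoretic identifications inside $S^2$, and then upgrade the strict pushout to a homotopy pushout — which coincides with the $\infty$-categorical pushout in $\cT op$ — by checking that the horizontal and vertical arrows are cofibrations.

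For the first diagram, the key observations are the two set-theoretic identities
$$P_i \cup P_j = P_{i,j} \quad \text{and} \quad P_i \cap P_j = P$$
inside $S^2$. Indeed, $P_i$ and $P_j$ are each obtained from $S^2$ by removing two of the three open discs, so their intersection removes all three (giving $P$), while their union removes only $D_k$ (giving $P_{i,j}$). This exhibits $P_{i,j}$ as the strict pushout $P_i \sqcup_P P_j$. For the second diagram, the analogous identities are $P \cap D_i = \partial D_i = b_i(S^1)$ and $P \cup D_i = P_i$, so $P_i$ is the strict pushout $P \sqcup_{S^1} D_i$.

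To conclude that these strict pushouts compute the pushouts in the $\infty$-category $\cT op$, I would equip $S^2$ with a CW structure whose cells are compatible with the three chosen discs $D_1, D_2, D_3$; with respect to this structure, all of the spaces $P$, $P_i$, $P_{i,j}$, $D_i$ are subcomplexes, and the inclusions appearing in both diagrams are inclusions of CW subcomplexes, hence closed Hurewicz cofibrations. A standard fact from model category theory says that strict pushouts along cofibrations agree with homotopy pushouts, so both squares are homotopy pushouts in the Quillen model structure on $\cT op$ and therefore pushouts in the underlying $\infty$-category.

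The only mild obstacle is the passage from strict to homotopy pushout, but as described this is handled cleanly by the cofibration argument. Alternatively, one can give a more hands-on proof by slightly thickening each of $P_i, P_j$ (resp. $P, D_i$) to an open neighborhood that deformation retracts onto it and then invoking the homotopy invariance of pushouts over open covers with contractible intersections; either route suffices, and neither requires any computation beyond the elementary point-set identifications above.
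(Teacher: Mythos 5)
The paper states this lemma without proof, treating it as a standard fact, so there is no argument of the authors' to compare yours against; your job is therefore to supply the justification, and your proposal does so correctly and by the expected route. The set-theoretic identities $P_i \cap P_j = P$, $P_i \cup P_j = P_{i,j}$ (using that the three discs are pairwise disjoint, so $(D_j \cup D_k)\cap(D_i \cup D_k) = D_k$) and $P \cap \overline{D_i} = \partial D_i$, $P \cup \overline{D_i} = P_i$ are exactly right, and the upgrade from strict to $\infty$-categorical pushout via CW-subcomplex inclusions being cofibrations is the standard mechanism (one should say that $\cT op$ here is the $\infty$-category of spaces presented by the Quillen or Str\o m model structure, in which a strict pushout with one leg a cofibration between cofibrant objects computes the homotopy pushout). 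The only point worth tightening is notational: in the second square the object $D_i$ must be read as the \emph{closed} disc, since the open disc does not contain $\partial D_i$ and the intersection identity $P \cap D_i = \partial D_i$ fails otherwise; your proof implicitly makes this choice, and it is the one the paper intends (it reserves $D$ for the closed disc). With that understood, the argument is complete.
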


\begin{lemma}
\label{lem:3}
Let $X \stackrel{i}{\rightarrow} Z \stackrel{j}{\leftarrow} Y$ be maps of quasi-compact and quasi-separated derived DM stacks. Denote  $l_X : X \times_Z Y \rightarrow X$ and $l_Y : X \times_Z Y \rightarrow Y$ the projections, and set   
$l_Z = i \circ l_X \cong j \circ l_Y$. Let $\cF$ be in $\cQ Coh(X)$ and let 
$\cG$ be in $\cQ Coh(Y)$. Then  there is a natural equivalence  
$i_*\cF \otimes j_*G = l_{Z*}(l_X^*\cF \otimes l_Y^* \cG)$   in $\cQ Coh(Z)$.
\end{lemma}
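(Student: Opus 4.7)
The plan is to prove the identity by stringing together the two standard tools available in this setting, namely the projection formula and derived base change, applied to the Cartesian square
\[
\xymatrix{
X \times_Z Y \ar[r]^-{l_Y} \ar[d]_{l_X} & Y \ar[d]^{j} \\
X \ar[r]_-{i} & Z.
}
\]
Both of these formal identities are known to hold for morphisms of quasi-compact, quasi-separated derived DM stacks in the $\infty$-categorical framework sketched in Section~\ref{sec:dag}; in this characteristic zero setting they can be cited, e.g., from the discussion of six-functor formalism in the paper of Ben-Zvi--Francis--Nadler that we are following (or from the more detailed accounts of Lurie and Gaitsgory--Rozenblyum).

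The computation will then proceed in three steps, starting from the left-hand side. First I would apply the projection formula for $i$ to obtain
\[
i_*\cF \otimes j_*\cG \;\cong\; i_*\bigl(\cF \otimes i^* j_*\cG\bigr).
\]
Next I would rewrite $i^* j_*\cG$ using derived base change along the Cartesian square above, giving $i^* j_*\cG \cong l_{X*} l_Y^*\cG$. Substituting yields
\[
i_*\cF \otimes j_*\cG \;\cong\; i_*\bigl(\cF \otimes l_{X*} l_Y^*\cG\bigr).
\]
Finally, a second application of the projection formula, this time for the morphism $l_X$, transforms the inner expression into $l_{X*}(l_X^*\cF \otimes l_Y^*\cG)$, and pushing forward via $i_* l_{X*} = l_{Z*}$ gives the required equivalence.

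The only genuinely non-formal step is the use of derived base change, which does depend on working with the correct derived fiber product (not the classical one) and on the quasi-compact, quasi-separated hypotheses in the statement; for honest underived DM stacks one would need to replace $X \times_Z Y$ by its derived enhancement, and for more general stacks one would need either perfectness or some analogous hypothesis. Once base change is accepted in this generality, the rest is a purely formal manipulation. I would also note in passing that the identity is natural in $\cF$ and $\cG$, which follows automatically since each of the three equivalences above is natural.
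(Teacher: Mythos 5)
Your proposal is correct and follows exactly the same route as the paper's own proof: projection formula for $i$, derived base change to rewrite $i^*j_*\cG$ as $l_{X*}l_Y^*\cG$, and a second application of the projection formula for $l_X$ (the paper cites To\"en's base change result for the middle step). Nothing essential differs.
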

\begin{proof}
There is a chain of natural equivalences: 
$$
i_*\cF \otimes j_*\cG \cong i_*(\cF \otimes i^*j_*\cG) \cong i_*(\cF \otimes l_{X*}l_Y^*\cG) \cong i_*l_{X*}(l_X^*\cF \otimes l_Y^*\cG) = l_{Z*}(l_X^*\cF \otimes l_Y^*\cG).
$$
The first and third equivalences follow from the projection formula, and the second follows from the base change formula of  \cite{To4} Proposition 1.4. 
\end{proof}

\begin{proof}[Proof of Proposition \ref{prop:schemes}]
Denote $\iota: X = t_0(L X) \rightarrow LX$ the natural  embedding. Note that $\iota$ can be described as the restriction map $X \cong X^D \rightarrow X^{S^1}$. Consider the diagram  
$$
\xymatrix{
& & X^{P_{12}} \ar[rd]^{s_2} \ar[ld]_{s_1} & & \\
& X^{P_1} \ar[dl]_{n_1} \ar[r]^{u_1} & X^P \ar[dl]^{p_1} \ar[d]^{p_3} \ar[dr]_{p_2} & X^{P_2} \ar[l]_{u_2} \ar[dr]^ {n_2} & \\
X^D \cong X \ar[r]^{i} & X^{S^1} & X^{S^1} & X^{S^1} & X^D \cong X \ar[l] .
}
$$
By Lemma \ref{lem:1} the top triangle and the right and left squares are all fiber products. The base change formula (see \cite{To4} Proposition 1.4)  implies that we have equivalences $p_1^*\iota_* \cF \cong u_{1*}n_1^*\cF$ and $p_2^*\iota_*\cG \cong u_{2*}n_2^*\cF$. 
Using Lemma \ref{lem:3} we 
can write 
$$
p_1^*\iota_* \cF \otimes p_2^*\iota_* \cG \cong u_{1*}n_1^*\cF \otimes u_{2*}n_2^*\cG \cong u_{1*}s_{1*}(s_1^{*}n_1^*\cF \otimes s_2^{*}n_2^*\cG) \cong u_{1*}s_{1*}(\cF \otimes \cG),
$$
where the last equivalence follows from the fact that, since 
$P_{12} \cong D \cong *$, 
$X^{P_{12}} \cong X$ and $(n_i \circ s_i)^* \cong Id$. 
Thus, 
$
\iota_*\cF \otimes^{str} \iota_*\cG =  p_{3*}(p_1^* \iota_*\cF \otimes p_2^*\iota_*\cG) \cong p_{3*}u_{1*}s_{1*}(\cF \otimes \cG) \cong \iota_*(\cF \otimes \cG),
$
and this concludes the proof.
\end{proof}

\section{Some observations on mapping stacks}

Let $\cX$ be a derived stack. In this section we collect some facts about the mapping stacks $L \cX$ and 
$P \cX$. There exists an evaluation map $S^1 \times L\cX \rightarrow L \cX$. Fixing a point on $S^1$ we get a map $L \cX \rightarrow \cX$ that we denote $ev$.
\begin{lemma}
\label{lem:fiber}
The following diagrams are fiber products in $d \cS t_\kappa$:
 $$ 
\xymatrix{
  P \cX \ar[d] \ar[r] & \cX \ar[d]^\Delta &&  P \cX \ar[d] \ar[r] & L \cX \ar[d]^{ev} \\
   \cX \ar[r]^{\Delta} & \cX \times \cX \times \cX, &&  L \cX \ar[r]^{ev} & \cX.}
$$
\end{lemma}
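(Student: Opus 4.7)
The plan is to reduce both claims to the formal fact that the mapping-stack functor
$\cX^{(-)} : \cT op^{op} \to d \cS t_\kappa$
carries homotopy colimits to homotopy limits. This is built into the definition of the mapping stack: for $S \in d \cA ff_\kappa$, $\cX^K(S) = \mathrm{Map}_{\cT op}(K, \cX(S))$, and mapping spaces convert colimits in the source to limits. It therefore suffices to exhibit the pair of pants $P$ as two different homotopy pushouts in $\cT op$, one matching each square in the statement.

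For the left square, I would use that $P$ is homotopy equivalent to the theta graph $\theta$, i.e., the graph with two vertices and three edges joining them. The graph $\theta$ is the homotopy pushout of $* \leftarrow *\sqcup *\sqcup * \to *$, with both maps the unique collapse. Applying $\cX^{(-)}$ then yields
\[
P \cX \;\cong\; \cX^{\theta} \;\cong\; \cX \times_{\cX \times \cX \times \cX} \cX,
\]
and both legs $\cX \to \cX \times \cX \times \cX$ are seen to be the diagonal $\Delta$, as required.

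For the right square, I would instead use the alternative homotopy equivalence $P \simeq S^1 \vee S^1$, with the wedge realized as the pushout $S^1 \cup_* S^1$. Applying $\cX^{(-)}$ now gives
\[
P \cX \;\cong\; L\cX \times_\cX L\cX,
\]
and the two maps $L\cX \to \cX$ are induced by the inclusion of the basepoint $* \to S^1$, which by construction is the evaluation map $ev$. The two required homotopy equivalences $P \simeq \theta$ and $P \simeq S^1 \vee S^1$ can be established directly by deformation retracting $P$ onto its spine; alternatively they follow by iteratively applying the pushouts of Lemma~\ref{lem:1} to glue discs into one or more boundary circles of $P$, which collapses the relevant $S^1$'s and produces the desired theta-graph or wedge-of-circles model.

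The main obstacle in carrying out this plan is a bookkeeping one: one must check that the maps appearing in the resulting derived fiber products genuinely coincide with $\Delta$ and with $ev$, and that the two different pushout presentations of the (up to homotopy equivalent) spaces produce canonically equivalent mapping stacks. Both points amount to tracing basepoints and evaluation maps through the colimit descriptions and are routine. No geometric or analytic input is needed beyond the formalism of derived mapping stacks already used in Section~\ref{sec:dag}.
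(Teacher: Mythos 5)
Your proposal is correct and follows essentially the same route as the paper: the paper's proof also observes that $P$ is homotopy equivalent to a wedge of two circles, presents it as the pushout of $* \leftarrow * \amalg * \amalg * \rightarrow *$ (the theta graph) for the first square and of $S^1 \leftarrow * \rightarrow S^1$ for the second, and then applies the mapping-stack functor, which sends colimits in $\cT op$ to limits in $d\cS t_\kappa$. Your version simply spells out the bookkeeping of the diagonal and evaluation maps that the paper leaves implicit.
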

\begin{proof}
Note that $P$ is equivalent to a wedge of two circles in $\cT op$. Thus $P$ can be expressed as the push-out of the following two diagrams: $
* \leftarrow * \amalg * \amalg * \rightarrow *, 
$
and  $S^1 \leftarrow * \rightarrow S^1$, the claim follows from here. \end{proof}

\begin{corollary}
Let $\cX$ be a DM stack, and denote $I^2 \cX$ the \emph{double inertia stack} of $\cX$. Then $t_0(P \cX) \cong I^2 \cX$. 
\end{corollary}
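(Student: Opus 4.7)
The plan is to derive the statement by combining the second fiber product diagram of Lemma \ref{lem:fiber} with the known description of the classical truncation of $L \cX$. Specifically, Lemma \ref{lem:fiber} gives an equivalence
$$
P \cX \;\cong\; L \cX \times_{\cX} L \cX
$$
in $d \cS t_\kappa$, with the two structure maps $L\cX \to \cX$ being copies of the evaluation map $ev$. I would apply the truncation functor $t_0$ to this derived fiber product and simplify, using the identification $t_0(L\cX) \cong I\cX$ established in the proposition preceding Section~\ref{sec:ddc} (the one stating $t_0(L\cX) \cong I\cX$ for DM stacks), together with the observation that $\cX$, being an ordinary DM stack, satisfies $t_0(\cX) = \cX$.

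The key technical ingredient is that $t_0$ commutes with the fiber product in question. This follows from the universal property of $t_0$ recalled in Section \ref{sec:dag}: the canonical embedding $t_0(F) \hookrightarrow F$ exhibits $t_0$ as right adjoint to the inclusion of underived stacks into $d\cS t_\kappa$, since any map from an ordinary stack into $F$ factors uniquely through $t_0(F)$. As a right adjoint, $t_0$ preserves all limits, and in particular it preserves the (derived) fiber product $L\cX \times_\cX L\cX$. Combining these observations yields
$$
t_0(P \cX) \;\cong\; t_0(L\cX) \times_{t_0(\cX)} t_0(L\cX) \;\cong\; I\cX \times_\cX I\cX \;=:\; I^2 \cX,
$$
which is the desired identification. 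The main conceptual point to emphasize is the interplay between the derived and classical fiber products: a priori, passing to $t_0$ of a derived intersection can lose information, but the adjunction guarantees that here we recover exactly the classical double inertia stack, with the structure maps $I\cX \to \cX$ being the truncations of the evaluation maps $L\cX \to \cX$.
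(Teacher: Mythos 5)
Your argument is correct and follows essentially the same route as the paper: both reduce to $P\cX \cong L\cX \times_\cX L\cX$ from Lemma \ref{lem:fiber}, the identification $t_0(L\cX)\cong I\cX$, and the commutation of $t_0$ with fiber products, which the paper states as the formula $t_0(\cX\times_\cZ\cY)\cong t_0(t_0(\cX)\times_{t_0(\cZ)}t_0(\cY))$ and you justify via the adjunction (right adjoints preserve limits) --- the two formulations are equivalent, since the limit in the target category of underived stacks is computed as $t_0$ of the derived fiber product of the truncations. The only point requiring care, which your closing remark shows you are aware of, is that the fiber product $t_0(L\cX)\times_{t_0(\cX)}t_0(L\cX)$ produced by the adjunction is the \emph{classical} one (a limit in underived stacks), not the derived fiber product $I\cX\times_\cX I\cX$ appearing later in the paper as $\cI^2\cX$, whose truncation is what actually equals $I^2\cX$.
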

\begin{proof}
Recall that $I^2 \cX$ is the \emph{ordinary} fiber product of the diagram 
$I \cX \rightarrow \cX \leftarrow I \cX$. 
The corollary is a consequence of the general fact that if $\cX \rightarrow \cZ \leftarrow \cY$ is a diagram 
in $d \cS t_\kappa$, then 
$$
t_0(\cX \times_\cZ \cY) \cong t_0(t_0(\cX) \times_{t_0(\cZ)} t_0(\cY)).
$$ 
 Note also that if $t_0(\cX), t_0(\cY), t_0(\cZ)$ are schemes or stacks, $t_0(t_0(\cX) \times_{t_0(\cZ)} t_0(\cY))$ coincides with their ordinary fiber product. Thus we have equivalences,   
$
t_0(P \cX) \cong t_0(L \cX \times_\cX L \cX) \cong t_0 (t_0(L \cX) \times_\cX t_0(L \cX)) = t_0(I \cX \times_\cX I \cX) \cong I^2 \cX.
$
\end{proof}

\begin{remark}
\label{rem:group}
Denote $q_1, q_2: I^2 \cX \rightarrow \cX$ the two projection maps. Further, in the category of \emph{ordinary} stacks, $I^2 \cX$ is a 
$\cX$-group. A discussion of this can be found in Remark 79.5.2 \cite{SP}. Multiplication and inverse are encoded in the two maps $\mu: I^2 \cX \rightarrow I \cX$ and $(-)^{-1} : I \cX \rightarrow I \cX$. We set $q_3 := \mu$. By Lemma \ref{lem:fiber} the derived stack $P \cX$ carries two projections $p_1, p_2: P \cX \rightarrow L \cX$. Note that these coincide with the restriction to two of the boundary components of $P$, and  the restriction to the third boundary gives a morphism $p_3: P \cX \rightarrow L \cX$ (see Section \ref{sec:ddc}).  
All these maps fit in a commutative diagram: 
$$
\xymatrix{
& I^2 \cX \ar[d] \ar[ld]_{q_1} \ar[rd]^{q_3} \ar[rrrd]^{q_2} & & &  \\
I \cX \ar[d] & P \cX \ar[ld]_{p_1} \ar[rd]^{p_3} \ar[rrrd]^{p_2} & I \cX \ar[d] & & I \cX \ar[d]
\\ 
L \cX &    & L \cX & & L \cX,  
}
$$
where the vertical arrows are given by the natural embedding $I \cX = t_0(L \cX) \rightarrow L\cX$ and $I^2 \cX = t_0(P \cX) \rightarrow P \cX$. \end{remark}

Let $G$ be an algebraic group acting on a scheme $X$, and let $\cX = [X/G]$ be the quotient stack. We let $X \times G \rightarrow X \times X$ and $X \times G \times G \rightarrow X \times X \times X$ be the maps defined on closed points by the assignment $(x, g) \mapsto (x, gx)$ and $(x,g,h) \mapsto (x, gx, hx)$.  The next Lemma gives an explicit construction of $L \cX$ and $P \cX$ as global quotients of \emph{derived} schemes. 

\begin{lemma}
\label{lem:quot}
\begin{itemize}
\item Let $L_GX$ be the derived scheme obtained as the following fiber product:
$$
\xymatrix{
L_GX \ar[d] \ar[r] & X \ar[d]^{\Delta} \\
X \times G \ar[r] & X \times X.}
$$
Then there is a natural action of $G$ on $L_GX$ and $L\cX$ is isomorphic to 
$[L_GX/G]$.
\item Let $P_G X$ be the derived scheme obtained as the following fiber product:
$$
\xymatrix{
P_GX \ar[d] \ar[r] & X \ar[d]^{\Delta} \\
X \times G \times G \ar[r] & X \times X \times X.}
$$
Then there is a natural action of $G$ on 
$P_GX$ and $P \cX$ is isomorphic to $[P_GX/G ]$.
\end{itemize}
\end{lemma}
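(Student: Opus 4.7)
The plan is to combine Lemma \ref{lem:fiber}, which expresses $L\cX$ and $P\cX$ as fiber products, with $G$-equivariant descent along the atlas $X \to \cX$. The key observation is that $X \to \cX = [X/G]$ is a $G$-torsor, so for any map of derived stacks $\cY \to \cX$ the pullback $\cY \times_\cX X \to \cY$ is again a $G$-torsor and one has $\cY \simeq [(\cY \times_\cX X)/G]$. This reduces the lemma to identifying the pullback of $L\cX$ (resp.\ $P\cX$) along the atlas with $L_G X$ (resp.\ $P_G X$) as $G$-equivariant derived schemes.

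For the loop stack, Lemma \ref{lem:fiber} gives $L\cX \simeq \cX \times_{\cX \times \cX} \cX$, and the two projections $L\cX \to \cX$ are naturally equivalent to the evaluation map. Pulling back along $X \to \cX$ and reassociating fiber products,
$$
L\cX \times_\cX X \;\simeq\; X \times_{\cX \times \cX} \cX,
$$
where $X \to \cX \times \cX$ is the composite of $X \to \cX$ with the diagonal, and $\cX \to \cX \times \cX$ is the diagonal. I would then unpack the functor of points: over a derived affine $T$, such a point is a tuple $(x, y, \alpha, \beta)$ with $x \in X(T)$, $y \in \cX(T)$, and two $2$-isomorphisms $\alpha, \beta$ between the images of $x$ and $y$ in $\cX(T)$. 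Using $\alpha$ to rigidify $y$ as the image of $x$, the remaining datum $\beta$ becomes an automorphism of this image, which for $\cX = [X/G]$ is exactly a $T$-point $g$ of $G$ with $gx = x$. This matches the functor of points of $L_G X = (X \times G) \times_{X \times X} X$, where the condition $gx = x$ is imposed by pulling the map $(x,g) \mapsto (x, gx)$ back along the diagonal.

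The argument for $P\cX$ is entirely parallel. From $P\cX \simeq \cX \times_{\cX \times \cX \times \cX} \cX$ (Lemma \ref{lem:fiber}, with triple diagonals on both sides), the same pullback computation yields $T$-points of the form $(x, g, h)$ with $gx = x$ and $hx = x$, matching $P_G X = (X \times G \times G) \times_{X \times X \times X} X$ via $(x,g,h) \mapsto (x, gx, hx)$. The induced $G$-actions on $L_G X$ and $P_G X$ come from the $G$-action on the atlas, with $h \in G$ acting by conjugation on the group coordinates; compatibility with torsor descent is then formal.

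The main obstacle I expect is respecting the derived structure throughout: these are all \emph{derived} fiber products in $d\cS t_\kappa$, so one must ensure no implicit truncation creeps in during the functor-of-points bookkeeping. The mitigating feature is that $X \to \cX$, being a $G$-torsor, already satisfies $X \times_\cX X \simeq X \times G$ at the derived level, so all of the nontrivial derived information is concentrated in the self-pullback of the diagonal (or triple diagonal), which the functor-of-points formalism tracks cleanly.
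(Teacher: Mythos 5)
Your proposal is correct and follows essentially the same route as the paper: the paper likewise reduces the claim to the identification $L\cX \times_\cX X \simeq L_G X$ (resp.\ $P\cX \times_\cX X \simeq P_G X$) followed by descent along the atlas, verifying both steps by pasting cartesian squares --- first $L_G X \cong X \times_{X\times X}(X \times G) \cong X \times_{\cX \times \cX} \cX \cong X \times_\cX L\cX$ using $X \times_\cX X \cong X \times G$, and then $L_G X \cong L\cX \times_{[*/G]} *$ --- rather than by unpacking functors of points. Your reassociation-of-fiber-products step is exactly the paper's first diagram, so the functor-of-points bookkeeping (with its attendant worries about preserving the derived structure) can safely be replaced by that purely formal manipulation.
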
 
\begin{proof}
The first part of the Lemma is stated without proof in Section 4.4 of \cite{To3}. We include a proof for completeness.
Consider the diagram, 
$$
\xymatrix{
L_GX \ar[d] \ar[r] & G \times X \ar[d] \ar[r] & \cX  \ar[d] \\
X  \ar[r] & X \times X \ar[r] & \cX \times \cX.}
$$
There is an equivalence $G \times X  \cong X \times_\cX X$. Standard properties of fiber products imply that there is an equivalence 
$G \times X \cong (X \times X) \times_{\cX \times \cX} \cX$, and therefore that the right square is a fiber product. The left square is a fiber product by the definition of $L_GX$. Thus the exterior square is a  fiber product as well.

Next note that the left square in the diagram 
$$
\xymatrix{
L_GX \ar[d] \ar[r] & L\cX \ar[d] \ar[r] & \cX  \ar[d] \\
X  \ar[r] & \cX \ar[r] & \cX \times \cX,}
$$
is a  fiber product. Indeed, both the right and the exterior squares are fiber products: the right square is a  fiber product by the discussion  in Section \ref{sec:dag}, and the fact that the exterior square is also a fiber product was proved in the previous paragraph.  
This and the fact that $\cX$ is equivalent to $[X/G]$ prove that both the right and the left squares in 
$$
\xymatrix{
L_GX \ar[d] \ar[r] & X \ar[d] \ar[r] & {*} \ar[d] \\
L\cX  \ar[r] & \cX \ar[r] & [*/G],
}
$$
are fiber products, and that therefore the exterior square is as well. Thus $L \cX$ is equivalent to $[L_GX/G]$, as we needed to show.

The second part of the Lemma is proved in a very similar way. Note that the fiber product of the diagram 
$
X \times X \times X \rightarrow \cX \times \cX \times \cX \leftarrow \cX $ is equivalent to $X \times_\cX X \times_\cX X \cong X \times G \times G$. Next consider the diagram, 
$$
\xymatrix{
P_GX \ar[d] \ar[r] & G \times G \times X \ar[d] \ar[r] & \cX  \ar[d] \\
X  \ar[r] & X \times X \times X \ar[r] & \cX \times \cX \times \cX.}
$$
The exterior square is a fiber product, as both  right and left squares are. As in the proof of the first part of the Lemma, leveraging this and Lemma \ref{lem:fiber} we conclude that the left square in the diagram 
$$
\xymatrix{
P_GX \ar[d] \ar[r] & P \cX \ar[d] \ar[r] & \cX  \ar[d] \\
X  \ar[r] & \cX \ar[r] & \cX \times \cX \times \cX,}
$$
is a fiber product. This implies that the exterior square in  
$$
\xymatrix{
P_GX \ar[d] \ar[r] & X \ar[d] \ar[r] & {*} \ar[d] \\
P \cX  \ar[r] & \cX \ar[r] & [*/G],
}
$$
is also a fiber product. That is, $P\cX \cong [P_GX/G]$, and this concludes the proof. 
\end{proof}

\begin{remark} 
\label{rem:disj}
Assume now that $X$ is affine, $G$ acts linearly and that $\cX = [X/G]$ is a DM stack with finite stabilizers (that is, such that the map $I \cX \rightarrow \cX$ is finite). Under these assumptions we can give a more explicit description of $P \cX$. If $g, h$ are in $G$ let $\Gamma_{g,h}$ be the image of $X$ in $X \times X \times X$ under the assignment: $x \mapsto (x, gx, hx)$. Let $\Delta \subset X \times X \times X$ be the diagonal subscheme.  Then the derived scheme $P_GX$ decomposes as the disjoint union 
$$
P_GX = \underset{g, h \in G}{\coprod} \Gamma_{g, h} \times_{X \times X \times X} \Delta.
$$
\end{remark}

\section{Virtual orbifold K-theory and the proof of the main Theorem}
\label{sec:proof}
\subsection{Virtual orbifold K-theory}
The virtual orbifold cohomology of differential orbifolds was introduced in 
\cite{LUX, LUX+}. Virtual orbifold cohomology is closely related to Chen-Ruan  cohomology, and a precise comparison between the two was obtained in Theorem 1.1 of \cite{LUX+}. In the setting of algebraic geometry, the study of virtual orbifold cohomology  and virtual orbifold K-theory was pursued in \cite{EJK2, EJK3}. 
We start by recalling briefly the setting of \cite{EJK2, EJK3}. 

Let $\cX$ be a smooth Deligne-Mumford stack with finite stabilizers. Assume  that $\cX$ admits a presentation as a global quotient of a smooth affine scheme by a linear algebraic group, $\cX = [X/G]$.  
\begin{definition} \begin{itemize}
\item Denote $I_GX$ the  \emph{inertia scheme} of $\cX$, 
$$
I_GX:= \{(x, g) | gx = x\} \subset X \times G . 
$$
\item Denote $I^2_G X$ the \emph{double inertia scheme} of $\cX$,
$$
I^2_G X:= \{(x, g, h) | gx = hx= x\} \subset X \times G \times G. 
$$
\end{itemize}
\end{definition}

\begin{remark}
\label{rem:disjdisj}
If $g \in G$ denote $X^g$ the \emph{underived} fixed locus of $g$: that is, if $\Gamma_g \subset X \times X$ is  the graph of $g$, set $X^g := \Gamma_g \cap \Delta$. Similarly if $g, h \in G$, set $X^{g,h} := X^g \cap X^h$.  
We can decompose $I_G X$ and $I^2_G X$ as the following disjoint unions: 
$$
I_G X = \underset{g \in G}{\coprod} X^g, \text{  } I_G^2 X = \underset{g, h \in G}{\coprod} X^{g, h}.
$$
\end{remark}

\begin{remark}
Let $L_G X$ and $P_G X$ be as in the statement of Lemma \ref{lem:quot}. Then we have isomorphisms $I_GX \cong t_0(L_G X)$ and $I^2_GX \cong t_0(P_G X)$. In particular, $I^2_GX$ is the \emph{underived} fiber product of $I_GX \rightarrow X \leftarrow I_GX$, and we denote the projections $q_1, q_2 : I^2_G X \rightarrow I_G X$. Further  
$I_G X$ is a $X$-group. We let 
$\mu: I^2_G X \rightarrow X$ be the multiplication and $(-)^{-1} : I_GX \rightarrow I_G X$ be the inverse. Note that in Remark \ref{rem:group} we used these  same notations to denote the projections $I^2 \cX \rightarrow I \cX$, and the multiplication and inverse map of $I \cX$: this should cause no confusion as it will be clear from the context whether we are referring to the inertia variety or to the inertia  stack.   \end{remark}

Note that $I_G X$ and $I^2_G X$ carry a natural action of $G$. This gives   presentations of the inertia and double inertia stack as global quotients: $I \cX \cong [I_G X/G]$ and $I^2 \cX \cong [I^2_G X/G]$. We can describe the sheaf theory of $I \cX$ and $I^2 \cX$ in terms of the equivariant sheaf theory of $I_G X$ and $I^2_G X$. In particular, $K_0(I \cX)$ and $K_0(I^2 \cX)$ are naturally  identified with the equivariant Grothendieck groups $K_G(I_G X)$ and $K_G(I^2_G X)$.

\begin{definition}[\cite{EJK1} Definition 3.1]
Let $\cR$ be a class on  $K_G(I^2_GX)$. Then we define a \emph{product}\footnote{We point out that here \emph{product} stands simply for a binary operation which is in principle neither unital nor associative. The works \cite{EJK1, EJK2} contain a careful study of the conditions on $\cR$ under which 
$*_\cR$ is unital, associative, or has various additional properties.} $*_\cR$ on $K_G(I_G X) \cong K_0(\cX)$ by the assignment: 
$$x, y \in K_G(I_GX), \quad  x *_\cR y = \mu_*(q_1^*x \cdot q_2^*y \cdot \lambda_{-1}(\cR)).$$
\end{definition}

\begin{definition}
[\cite{EJK3}  Definition 2.16, \cite{LUX} Definition 19]
\label{def:virt} 
 Let $u: I^2_G X \subset X \times G \times G \rightarrow X$ be the projection on the first factor. Set 
$
\cB := u^*T_X + T_{I^2_GX} - q_1^*T_{I_GX} - q_2^*T_{I_GX} \in K_G(I^2 \cX), 
$
and $\cR := \lambda_{-1}(\cB^*)$. Then the \emph{virtual orbifold product} $*^{virt}$ is given by $*^{virt} := *_{\cR}$.
\end{definition}

Many properties of the virtual orbifold product have been investigated in \cite{EJK2, EJK3, LUX} and \cite{LUX+}. We list two of the most important here:
\begin{itemize} 
\item The product $*^{virt}$ is unital, associative and commutative (\cite{EJK2} Proposition 4.3.2). 

\item $ K^{virt}(\cX):=K_0(\cX, *^{virt})$ has a structure of $\lambda$-ring (see \cite{EJK3} Corollary 5.17).

\item $ K^{virt}(\cX)$ is a Frobenius algebra (see \cite{LUX+} Theorem 2.3 and \cite{EJK1} Proposition 3.5). 
\end{itemize}

We remark that from the vantage point of Theorem \ref{main:loop}, the first two  properties are a consequence of the fact that 
$K^{virt}(\cX)$ is the Grothendieck group of the $\cE_2$-category $\cC oh(L \cX)$. As for the third point, $\cC oh(L \cX, \otimes^{str})$ is a Frobenius algebra object in the closed symmetric monoidal $\infty$-category of small, stable and split closed 
$\infty$-categories (this is a consequence of \cite{BFN} Proposition 6.3). Thus the same is true of its Grothendieck group, which  is isomorphic to 
$K^{virt}(\cX)$. 

\subsection{The derived double inertia stack and excess intersection}
\label{sec:ddine}
Let $\cX$ be a DM stack satisfying the same assumptions as in the previous section: that is, $\cX$ is smooth, has finite stabilizers and can be presented as the  global quotient $[X/G]$ of an affine scheme by a linear group. 
It will be useful to introduce a derived stack, the \emph{derived double inertia stack}, denoted $\cI^2 \cX$, that in a precise sense interpolates  between 
$P \cX$ and $I^2 \cX$. 
It is possible to describe explicitly $\cO_{P \cX}$ and $\cO_{\cI^2 \cX}$ as classes in 
$K_0(I^2 \cX)$, and we will do this next: the calculation of the class of $\cO_{\cI^2 \cX}$ will be especially important in the proof of Theorem \ref{main:loop}.

\begin{definition}
The \emph{derived double inertia stack} of $\cX$, denoted  $\cI^2 \cX$, is the \emph{derived} fiber product of $I \cX \rightarrow X \leftarrow \cX$, that is $\cI^2 \cX = I \cX \times_\cX I \cX$. 
\end{definition}

\begin{remark}
\label{rem:dine}
We have an equivalence  $t_0(\cI^2 \cX) \cong  I^2 \cX$. The derived double inertia stack $\cI^2 \cX$ can be realized as the quotient of the derived scheme $\cI_G^2 X := I_GX \times_X I_G X$ by the action of $G$: we have
$\cI^2 \cX = [\cI^2_GX /G]$. Note that the geometry of $\cI_G^2 X$ is especially simple as it decomposes as the disjoint union
$
\cI_G^2 X = \underset{g, h \in G}{\coprod} X^g \times_X X^h.
$
\end{remark} 

\begin{lemma}
\label{lem:fiberfiber}
Denote $\iota: I \cX \rightarrow L \cX$ the natural embedding. Let $Y$ be the fiber product of the diagram  
$L \cX \stackrel{ev}{\rightarrow} \cX \stackrel{\iota}{\leftarrow} I \cX$. Denote 
$p_i: P \cX \rightarrow L \cX$, $i=1,2$, the projections as in Remark \ref{rem:group}.  Then the following diagrams are fiber products:  
$$
 \xymatrix{   
Y \ar[d] \ar[r] & P \cX \ar[d]^{p_i}  &&  \cI^2 \cX \ar[d] \ar[r] &  Y \ar[d]  \\
I \cX  \ar[r] & L\cX,  &&  Y  \ar[r] & P \cX. }
$$ 
 \end{lemma}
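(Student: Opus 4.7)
The plan is to deduce both claims directly from the associativity of derived fiber products, using the identification $P\cX \cong L\cX \times_\cX L\cX$ supplied by Lemma \ref{lem:fiber} (where the two factors correspond to the projections $p_1$ and $p_2$). Everything else is bookkeeping with iterated pullbacks in $d\cS t_\kappa$.

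For the first square, I would check that for either $i \in \{1,2\}$, associativity of fiber products gives a chain of canonical equivalences
$$
P\cX \times_{L\cX,\, p_i} I\cX \;\cong\; (L\cX \times_\cX L\cX) \times_{L\cX} I\cX \;\cong\; L\cX \times_\cX (L\cX \times_{L\cX} I\cX) \;\cong\; L\cX \times_\cX I\cX \;=\; Y,
$$
where at the last step the redundant factor $L\cX \times_{L\cX} I\cX$ collapses to $I\cX$. This is precisely the universal property being asserted.

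For the second square, I would take the two maps $Y \to P\cX$ to be the ones produced by part (1) when the $I\cX$ factor is pulled back along $p_1$ and along $p_2$, respectively. Applying part (1) to both copies of $Y$ and using associativity yields
$$
Y \times_{P\cX} Y \;\cong\; (P\cX \times_{L\cX,\, p_1} I\cX) \times_{P\cX} (P\cX \times_{L\cX,\, p_2} I\cX) \;\cong\; I\cX \times_{L\cX,\, p_1} (L\cX \times_\cX L\cX) \times_{L\cX,\, p_2} I\cX.
$$
Collapsing each $L\cX \times_{L\cX} I\cX$ to $I\cX$ then gives $Y \times_{P\cX} Y \cong I\cX \times_\cX I\cX = \cI^2 \cX$.

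The only real subtlety is bookkeeping: one must match the two projections $p_1, p_2: P\cX \to L\cX$ of Remark \ref{rem:group} with the two factors of $L\cX \times_\cX L\cX$ under the equivalence of Lemma \ref{lem:fiber}, and verify that the two morphisms $Y \to P\cX$ appearing in the second diagram correspond to inserting $\iota: I\cX \to L\cX$ into the first versus the second factor. Once these conventions are fixed, both identifications are formal, and I do not anticipate any substantive obstacle beyond checking that the structure maps in the lemma's diagrams coincide with the canonical ones produced by this chain of equivalences.
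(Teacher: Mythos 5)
Your proposal is correct and is essentially the paper's argument: both rest on the identification $P\cX \cong L\cX \times_\cX L\cX$ from Lemma \ref{lem:fiber} together with formal manipulation of iterated derived fiber products. The paper phrases the manipulation as the two-out-of-three (pasting) property for Cartesian squares in a $2\times 2$ and a $3\times 3$ grid rather than as explicit associativity of $\times_{(-)}$, but these are equivalent formulations of the same bookkeeping.
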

\begin{proof}
To see that the first diagram is a fiber product consider 
$$
 \xymatrix{
Y \ar[d] \ar[r] & P \cX \ar[d]^{p_1} \ar[r]^{p_2} & L\cX \ar[d]  \\
I \cX  \ar[r] & L \cX \ar[r] & \cX.  }
$$
The right square is a fiber product by Lemma \ref{lem:fiber}, and the exterior one is a fiber product by the definition of $Y$, thus the left square has to be a fiber product as well. 

In order to prove that the second diagram is a fiber product, we write it as the upper-left square in 
$$
\xymatrix{
\cI^2 \cX  \ar[d] \ar[r] & Y \ar[d] \ar[r] & I \cX \ar[d]^ \iota  \\
Y  \ar[r] \ar[d]  & P \cX \ar[r]^{p_2} \ar[d]^{p_1} & L \cX \ar[d] \\
I \cX  \ar[r]^ \iota & L \cX \ar[r] & \cX. }
$$
Note that the top right, bottom right, and bottom left squares are all fiber products by the first part of the Lemma and by Lemma \ref{lem:fiber}. Also the derived double inertia stack  
$\cI^2 \cX$ is the fiber product of $I \cX \rightarrow \cX \leftarrow I \cX$: thus, also the exterior square of the diagram is a fiber product. Standard properties of fiber products then imply that the top left square is also a fiber product, and this concludes the proof.

%We show first that $\cI^2 \cX$ is the fiber product of $Y \rightarrow L \cX \stackrel{\iota}{\leftarrow} I \cX$: that is, the exterior square of the left half of the diagram is a fiber product. For this, restrict to the two squares obtained by removing from the diagram the middle horizontal arrows: the right and exterior squares are both fiber products (since $Y$ is the fiber product of $I \cX \rightarrow \cX \leftarrow L \cX$, and $\cI^2 \cX$ of $I \cX \rightarrow \cX \leftarrow I \cX$), and so the left square is also a fiber product. 
%Now focus on the left half of the diagram. We have just showed that the exterior square is a fiber product, and by the first part of the Lemma the bottom one is as well. Thus the upper-left square is a fiber product, and this concludes the proof. 
\end{proof}

\begin{remark}
\label{rem:commutativity}
The derived stack $\cI^2 \cX$ carries two projections $r_1, r_2: \cI^2 \cX \rightarrow I \cX$, and a multiplication map $r_3: \cI^2 \cX \rightarrow I \cX$. Also we have maps $I^2 \cX \stackrel{j}{\rightarrow} \cI^2 \cX \stackrel{l}\rightarrow P \cX$, where $l$ comes from  the second fiber product of Lemma \ref{lem:fiberfiber}, and $j$ and $i := l \circ j$ are the canonical embeddings of $I^2 \cX = t_0(\cI^2 \cX) = t_0(P \cX)$ into $\cI^2 \cX$ and $P \cX$.  It is important to clarify the relationship between these maps and the various maps to and from $I^2 \cX$ and $L^2 \cX$ that were considered in Remark \ref{rem:group}. We use the notations of Remark \ref{rem:group}: for all $i=1,2, 3$, the following is a commutative diagram 
$$
\xymatrix{
I^2 \cX \ar[r]^j \ar[rd]_{q_i} &  \cI^2 \cX \ar[d]_{r_i} \ar[r]^l & P \cX \ar[d]^{p_i} \\
& I \cX \ar[r] & L \cX.
}
$$
\end{remark}

\begin{lemma}[\cite{CKS} Proposition A.3]
\label{lem:excess}
Let $X, Y, Z$ be schemes and assume that  $Z$ is smooth. Suppose that there are embeddings $X \rightarrow Z \leftarrow Y$, let $W = X \cap Y$, and denote $E$ the \emph{excess (intersection)  bundle}, 
$
E = T_Z|_W / (T_X|_W + T_Y|_W).
$
Then, in $K_0(W)$, we have the identity:   
$$ \sum \limits (-1)^i \pi_i \cO_{X \times_Z Y} = \sum \limits (-1)^ i\Lambda^ i E^\vee =\lambda_{-1}(E^\vee).$$ 
\end{lemma}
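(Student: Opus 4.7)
The plan is to reduce the claim to a local Koszul resolution. Since the statement is étale-local on $W$, I would work in an affine neighborhood where the regular embedding $Y \hookrightarrow Z$ (both $Y$ and $Z$ being smooth) is cut out by a regular sequence $g_1, \ldots, g_s$, with $s = \mathrm{codim}(Y, Z)$. The Koszul complex $K_\bullet(g_1, \ldots, g_s) \simeq \Lambda^\bullet N_{Y/Z}^\vee$ then provides a locally free resolution of $\cO_Y$ over $\cO_Z$, yielding the identification
$$
\cO_X \otimes^L_{\cO_Z} \cO_Y \simeq K_\bullet(\bar g_1, \ldots, \bar g_s) \quad \text{on } X,
$$
where $\bar g_i = g_i|_X$. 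This is the Koszul complex on $X$ for the section $\bar s = (\bar g_i)$ of $\cO_X^s$, whose scheme-theoretic zero locus is $W$. The homotopy sheaves $\pi_i \cO_{X \times_Z Y}$ are precisely the $(-i)$-th cohomology sheaves of this Koszul complex, all of which are coherent sheaves on $W$.

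Next I would identify the excess bundle intrinsically from this local model. The linearization of $\bar s$ along $W$ gives a morphism $\cO_W^s \to N_{W/X}^\vee$, $e_i \mapsto d\bar g_i$, which under the canonical trivialization $\cO_W^s \cong N_{Y/Z}^\vee|_W$ induced by the regular sequence is the natural restriction of conormal bundles. It fits into a short exact sequence
$$
0 \to E^\vee \to N_{Y/Z}^\vee|_W \to N_{W/X}^\vee \to 0,
$$
obtained by dualising $0 \to N_{W/X} \to N_{Y/Z}|_W \to E \to 0$; the latter is a direct consequence of the definition $E = T_Z|_W/(T_X|_W + T_Y|_W)$ together with the identification $T_W = (T_X \cap T_Y)|_W$, which holds under the smoothness hypotheses.

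The core calculation is then the classical identification $H^{-i}(K_\bullet(\bar s)) \cong \Lambda^i E^\vee$ on $W$. I would lift a local splitting $N_{Y/Z}^\vee|_W \cong N_{W/X}^\vee \oplus E^\vee$ to a splitting of $\cO_X^s$ on a neighborhood of $W$, and then use that the ``excess components'' of $\bar s$ vanish on $W$, hence lie in the ideal generated by the ``regular components''; a Nakayama-type change of basis within $\cO_X^s$ brings $\bar s$ into the form $(h_1, \ldots, h_r, 0, \ldots, 0)$, where $r = \mathrm{codim}(W, X)$ and $h_1, \ldots, h_r$ form a regular sequence cutting out $W$. The Koszul complex then factorizes as $K_\bullet(h_\bullet) \otimes \Lambda^\bullet E^\vee[0]$ (the second factor with zero differential), and the Künneth formula yields $H^{-i} \cong \cO_W \otimes \Lambda^i E^\vee = \Lambda^i E^\vee$. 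Summing with alternating signs gives $\sum_i (-1)^i \pi_i \cO_{X \times_Z Y} = \lambda_{-1}(E^\vee)$ in $K_0(W)$, as claimed.

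The main obstacle is the last change-of-basis step: simultaneously eliminating the excess components in a manner compatible with the exact sequence of conormals, so that the trivial factor produced is canonically identified with $E^\vee$. I expect this to be handled cleanly by induction on $\mathrm{rank}\, E$, killing one excess component at a time and at each stage invoking the regular-sequence case together with Nakayama's lemma to absorb the redundant generator; alternatively, one may verify the identification by comparing cotangent complexes, using that the cotangent complex of the derived fiber product restricts on $W$ to a two-term complex whose $K$-theory class is $-E$, so that its structure sheaf has class $\lambda_{-1}(E^\vee)$ by general principles.
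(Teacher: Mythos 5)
The paper offers no proof of this lemma at all: it is imported as a black box from \cite{CKS}, Proposition A.3, so there is nothing internal to compare against. Your argument is, in outline, the standard proof of that classical fact (and essentially the one in the cited reference): resolve $\cO_Y$ locally over $\cO_Z$ by the Koszul complex of a regular sequence, restrict to $X$, use the conormal sequence $0 \to E^\vee \to N^\vee_{Y/Z}|_W \to N^\vee_{W/X} \to 0$ to split the resulting section of $\cO_X^{s}$ into a part $(h_1,\dots,h_r)$ that is regular for $W$ and an excess part lying in $I_W^2$, absorb the latter by a unipotent change of basis congruent to the identity mod $I_W$, and read off $H^{-i} \cong \Lambda^i E^\vee$ from the K\"unneth factorization. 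This is correct, modulo the remark that the lemma as stated only assumes $Z$ smooth while your argument (and the true statement) needs $X$, $Y$, $W$ smooth with $T_W = T_X|_W \cap T_Y|_W$; that is what makes $E$ a bundle and the conormal sequence exact, and it does hold in the paper's application to fixed loci in characteristic $0$.

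The one genuine gap is the step you yourself flag as the main obstacle, and your proposed fixes do not close it. The conclusion is an identity in $K_0(W)$, and $K_0$-classes are not local data: two sheaves that are isomorphic on an open cover need not have equal classes (any two line bundles are locally isomorphic). So the local computation $H^{-i} \cong \Lambda^i E^\vee$ does not by itself yield $[\pi_i \cO_{X \times_Z Y}] = [\Lambda^i E^\vee]$ in $K_0(W)$; you need a global isomorphism, and ``induction on $\mathrm{rank}\,E$, killing one excess component at a time'' is still a local construction. The clean repair is to observe that $\bigoplus_i \Tor_i^{\cO_Z}(\cO_X,\cO_Y)$ is a sheaf of graded-commutative $\cO_W$-algebras, that $\Tor_1$ is \emph{canonically} the kernel $E^\vee$ of $N^\vee_{Y/Z}|_W \to N^\vee_{W/X}$, and that the induced global map $\Lambda^\bullet E^\vee \to \Tor_\bullet$ is an isomorphism precisely because your local computation says so. Your alternative via the cotangent complex is circular as phrased: the ``general principle'' that a quasi-smooth derived thickening of $W$ has structure sheaf of class $\lambda_{-1}$ of the dual excess bundle is exactly the lemma being proved.
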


\begin{lemma}
\label{prop:excessexcess}
Let $g, h$ be in $G$, and set $W:=X^{g,h}$.  Then:
\begin{itemize}
\item The class in $K_0(W)$ of the excess intersection  bundle $E$ of $X^g$ and $X^h$ in $X$ is given by 
$
E= T X- TX^g -TX^h + TX^{g,h}.  
$ 

\item 
The class in $K_0(W)$ of the excess intersection bundle $E$ of $\Gamma^{g,h}$ and $\Delta$ in $X \times X \times X$ is given by 
$
E =TX + T X^{g,h}. 
$ 

\end{itemize}
\end{lemma}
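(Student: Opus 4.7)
The plan is to apply Lemma \ref{lem:excess} directly: both parts of the statement reduce to computing, in $K_0(W)$, the rank-preserving identity $[E] = [T_Z|_W] - [T_X|_W] - [T_Y|_W] + [T_W]$ that is implicit in the quotient definition $E = T_Z|_W/(T_X|_W + T_Y|_W)$. For this reduction to be valid one needs that the tangent bundle $T_W$ of the scheme-theoretic intersection agrees with the intersection $T_X|_W \cap T_Y|_W$ of subbundles inside $T_Z|_W$; once that holds, we get $\mathrm{rk}(T_X|_W + T_Y|_W) = \mathrm{rk}(T_X|_W) + \mathrm{rk}(T_Y|_W) - \mathrm{rk}(T_W)$, and the class of the quotient takes the stated form.

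For (1), since $G$ acts linearly on the smooth affine scheme $X$ and stabilizers are finite, the fixed loci $X^g$, $X^h$ and $X^{g,h}$ are smooth subschemes, and at a point $w \in W$ the tangent spaces $T_wX^g$ and $T_wX^h$ are the fixed subspaces of $T_wX$ under the induced linear actions of $\langle g\rangle$ and $\langle h\rangle$. Their intersection is the joint fixed subspace $T_wX^{g,h} = T_wW$, so the hypothesis above is satisfied with $Z=X$, and the formula $E = TX - TX^g - TX^h + TX^{g,h}$ follows.

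For (2), I would first identify the tangent bundles of the two subvarieties. Both $\Gamma^{g,h}$ and $\Delta$ are images of $X$ in $X \times X \times X$ via the smooth closed embeddings $x \mapsto (x,gx,hx)$ and $x \mapsto (x,x,x)$, so their tangent bundles restricted to $W$ are each isomorphic to $TX|_W$; the ambient tangent bundle restricts to $3\,TX|_W$. To confirm the intersection hypothesis, at a point $w \in W$ the tangent space $T_w\Gamma^{g,h}$ is $\{(v, g\cdot v, h\cdot v): v \in T_wX\}$, which meets $T_w\Delta = \{(v,v,v)\}$ exactly when $g\cdot v = v = h\cdot v$, i.e.\ when $v \in T_wW$. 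Substituting into the K-theoretic formula gives $E = 3\,TX|_W - TX|_W - TX|_W + TX^{g,h} = TX + TX^{g,h}$, as claimed.

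There is no real obstacle: the only subtle point is the transversality statement $T_{X^g}|_W \cap T_{X^h}|_W = T_W$ (and its analogue for $\Gamma^{g,h} \cap \Delta$), which justifies invoking Lemma \ref{lem:excess}. This follows from the linearity of the $G$-action and the fact that, over characteristic $0$, the fixed locus of a finite linear action is cut out transversely by the coinvariants. Once this is noted, both formulas are purely a matter of bookkeeping with ranks.
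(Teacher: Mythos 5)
Your proposal is correct and follows essentially the same route as the paper: both arguments reduce to the inclusion--exclusion identity for the class of $T_{X^g}|_W + T_{X^h}|_W$ (resp.\ $T_{\Gamma^{g,h}}|_W + T_\Delta|_W$) inside the ambient tangent bundle, with the key identification of the intersection of tangent subspaces with $TX^{g,h}$ coming from linearity of the action. The paper phrases this as computing the cokernel of an explicit map of bundles, but the bookkeeping is identical to yours.
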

\begin{proof} 
Both in the statement of the Proposition and in the proof all bundles are always implicitly assumed to be restricted to $W$: $TX^g$, $TX^h$ and $TX^{g,h}$ denote respectively the $g$-invariant, $h$-invariant and $<g,h>$-invariant sub-bundles of $TX|_W$. We start from the first statement: $E$ is by definition the cokernel of the embedding $T X^g + TX^h \rightarrow T X $. Thus the class in $K_0(W)$ of 
$E$ is given by 
$
E = TX - TX^g \oplus TX^h + TX^{g,h} = TX - TX^g - TX^h + TX^{g,h}.
$ 
As for the second statement, the excess bundle is now isomorphic to       the cokernel of the map $$
TX \times TX \rightarrow TX \times TX \times TX, \text{  } (u, v) \mapsto (u + v, gv, hv), 
$$
and therefore $E = 3TX -TX -TX + TX^{g,h} = TX + TX^{g,h}$ in $K_0(W)$.
\end{proof}

\begin{proposition}
\label{prop:excess}
\begin{itemize}
\item 
The excess intersection bundles 
$$
E^{g,h} = TX|_{X^{g,h}} - TX|_{X^{g,h}}^g -TX|_{X^{g,h}}^h + TX|_{X^{g,h}}^{g,h}
$$ 
assemble to a bundle $\cE_{\cI^2}$ on 
$I^2_G X$, and  
$\sum \limits (-1)^ i \pi_i\cO_{\cI^2_GX} = \lambda_{-1}(\cE_{\cI^2}^\vee)$ in $K_0(I^2_GX)$. 
\item The excess intersection bundles 
$$
E^{g,h} = TX|_{X^{g,h}} + TX|_{X^{g,h}}^{g,h}
$$
assemble to a bundle $\cE_P$ on 
$I^2_G X$, and  
$\sum \limits (-1)^ i \pi_i\cO_{P_GX} = \lambda_{-1}(\cE_P^\vee)$ in $K_0(I^2_GX)$.
\end{itemize}
\end{proposition}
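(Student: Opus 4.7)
The plan is to reduce both identities to their natural decomposition by components and then apply the excess intersection formula of Lemma \ref{lem:excess} separately on each piece.

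First, I would invoke the disjoint union decompositions already at hand. By Remark \ref{rem:dine} we have $\cI^2_G X = \coprod_{g,h \in G} X^g \times_X X^h$, and by Remark \ref{rem:disj} we have $P_G X = \coprod_{g,h \in G} \Gamma_{g,h} \times_{X \times X \times X} \Delta$. Under the identification $I^2_G X = \coprod_{g,h} X^{g,h}$ of Remark \ref{rem:disjdisj}, both sides of each identity to be proved are additive with respect to this decomposition, so it suffices to establish them on a single fixed component $X^{g,h}$. The phrase \emph{assemble to a bundle} in the statement is simply the observation that the component-wise excess bundles $E^{g,h}$ on $X^{g,h}$ glue trivially across connected components to give a class $\cE_{\cI^2}$ (respectively $\cE_P$) on $I^2_G X$.

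For the first identity, I would apply Lemma \ref{lem:excess} to the pair of embeddings $X^g \hookrightarrow X \hookleftarrow X^h$ inside the smooth ambient $X$. Its derived intersection is precisely the $(g,h)$-component of $\cI^2_G X$, and its set-theoretic intersection is $W = X^{g,h}$. The first part of Lemma \ref{prop:excessexcess} identifies the excess bundle with the class $E^{g,h}$ appearing in the statement, and Lemma \ref{lem:excess} then delivers the equality $\sum_i (-1)^i \pi_i \cO_{X^g \times_X X^h} = \lambda_{-1}((E^{g,h})^\vee)$ in $K_0(X^{g,h})$. Summing over $(g,h)$ yields part (1). Part (2) is entirely parallel: the graph $\Gamma_{g,h}$ and the diagonal $\Delta$ are smooth closed subschemes of the smooth ambient $X \times X \times X$, their set-theoretic intersection is again $X^{g,h}$ (via $x \mapsto (x,x,x)$), and the excess bundle is computed by the second part of Lemma \ref{prop:excessexcess}. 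A further summation over $(g,h)$ finishes the proof.

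The main obstacle is verifying that Lemma \ref{lem:excess} actually applies, which requires each $X^g$ to be smooth inside $X$. This is where I would genuinely use the hypothesis that $\cX$ is a smooth Deligne--Mumford stack with finite stabilizers over a field of characteristic $0$: on each component on which $X^g$ is nonempty, the action of $g$ has finite order and is formally linearizable, so $X^g$ is a smooth closed subscheme of $X$ by the classical Cartan linearization lemma. Once this smoothness is granted, everything else is bookkeeping: the excess intersection formula handles the derived structure of the relevant fiber products, and the disjoint union decompositions of Remarks \ref{rem:dine} and \ref{rem:disj} handle the assembly of the bundles across components.
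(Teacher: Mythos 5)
Your proposal is correct and follows essentially the same route as the paper: decompose $\cI^2_G X$ and $P_G X$ into their $(g,h)$-components, apply the excess intersection formula of Lemma \ref{lem:excess} on each piece, and identify the excess bundles via Lemma \ref{prop:excessexcess}. Your additional remark justifying the smoothness of the fixed loci $X^g$ (needed for Lemma \ref{lem:excess} to apply) is a point the paper leaves implicit, and is a welcome clarification.
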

\begin{proof} 
By Remark \ref{rem:disj}, Remark \ref{rem:disjdisj} and Remark \ref{rem:dine}, we have decompositions: 
$$
P_G X =  \underset{g, h \in G}{\coprod} \Gamma_{g, h} \times_{X \times X \times X} \Delta, \text{  } \cI^2_G X = \underset{g, h \in G}{\coprod} X^g \times_X X^h \text{ and  } I_G^2 X = \underset{g, h \in G}{\coprod} X^{g, h}.
$$
Thus, by Lemma \ref{lem:excess}, the classes in K-theory of 
$\cO_{\cI^2_GX}$ and 
$\cO_{P_GX}$ can be described in terms of the  excess intersection bundles on each component $X^{g,h}$ of $I^2_GX$. 
These have been calculated in Lemma  
\ref{prop:excessexcess} and coincide with the classes appearing in the claim.  
\end{proof}

The bundles $\cE_{\cI^2}$ and $\cE_P$ carry a canonical $G$-equivariant structure. With slight abuse of notation we keep denoting these bundles $\cE_{\cI^2}$, 
$\cE_P$ also when we 
regard them as objects of the $G$-equivariant category $\cC oh_G(I^2_GX)$, or equivalently of $\cC oh(I^2 \cX)$: in the statement of  
the following Corollary   the notations $\cE_\cI^2$ and $\cE_P$ 
are used in this sense, that is to refer to the corresponding bundles on $I^2 \cX$. 

\begin{corollary}
\label{cor:excess}
\begin{itemize}
\item The class $\sum \limits (-1)^ i \pi_i\cO_{\cI^2 \cX}$ is equal to  
$\lambda_{-1}(\cE_{\cI^2}^\vee)$ in 
$K_0(I^2 \cX)$. 
\item The class $\sum \limits (-1)^ i \pi_i\cO_{P  \cX}$ is equal to $\lambda_{-1}(\cE_{P}^\vee)$ in 
$K_0(I^2 \cX)$.
\end{itemize}
\end{corollary}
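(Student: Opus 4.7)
The plan is to deduce the corollary from Proposition \ref{prop:excess} by descending the equivariant identities on $I^2_G X$ to the quotient stack $I^2 \cX \cong [I^2_G X/G]$. The core observation is that each of the derived schemes $\cI^2_G X$ and $P_G X$ carries a canonical $G$-action, and the quotient presentations
\[
\cI^2 \cX \cong [\cI^2_G X/G], \qquad P \cX \cong [P_G X/G]
\]
established in Lemma \ref{lem:quot} and Remark \ref{rem:dine} identify $\cO_{\cI^2 \cX}$ (respectively $\cO_{P \cX}$) with the $G$-equivariant structure sheaf on $\cI^2_G X$ (respectively $P_G X$). Hence their truncations $\pi_i \cO_{\cI^2 \cX}$ and $\pi_i \cO_{P \cX}$ correspond to the $G$-equivariant truncations $\pi_i \cO_{\cI^2_G X}$ and $\pi_i \cO_{P_G X}$ under the standard equivalence $\cC oh(I^2 \cX) \cong \cC oh_G(I^2_G X)$.

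First I would record, once and for all, the ring isomorphism $K_0(I^2 \cX) \cong K_G(I^2_G X)$ that arises from this descent, and observe that it is compatible with pushforward along $G$-equivariant closed embeddings (so it carries classes of structure sheaves to classes of structure sheaves) and with the $\lambda$-operations (so it intertwines $\lambda_{-1}(-^\vee)$ on both sides). Next I would note that the excess bundles $E^{g,h}$ assembled in Proposition \ref{prop:excess} are manifestly $G$-equivariant: the $G$-action on $I^2_G X = \coprod_{g,h} X^{g,h}$ permutes the indexing pairs by simultaneous conjugation $(g,h) \mapsto (kgk^{-1}, khk^{-1})$ and matches the fixed loci accordingly, and each expression $TX|_{X^{g,h}} - TX|^g - TX|^h + TX|^{g,h}$ (respectively $TX|_{X^{g,h}} + TX|^{g,h}$) is built functorially from the tangent bundle of $X$ and from subbundles of invariants, all of which are $G$-equivariant. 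This is precisely the sense in which $\cE_{\cI^2}$ and $\cE_P$ already lift to objects of $\cC oh(I^2 \cX)$.

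With these compatibilities in place, the identities
\[
\textstyle\sum_i (-1)^i \pi_i \cO_{\cI^2_G X} = \lambda_{-1}(\cE_{\cI^2}^\vee), \qquad \textstyle\sum_i (-1)^i \pi_i \cO_{P_G X} = \lambda_{-1}(\cE_P^\vee),
\]
which Proposition \ref{prop:excess} establishes in $K_0(I^2_G X)$, lift verbatim to $K_G(I^2_G X) \cong K_0(I^2 \cX)$ once one checks that each ingredient in Lemma \ref{lem:excess} applied to the decompositions of Remark \ref{rem:disj} and Remark \ref{rem:dine} is $G$-equivariant. Concretely, the local excess-intersection calculation takes place on each component $X^{g,h}$, and under the $G$-action the component labelled $(g,h)$ is carried isomorphically to the component labelled $(kgk^{-1}, khk^{-1})$ in a way that intertwines the maps from the Tor spectral sequences, so the resulting identity in equivariant K-theory is forced by the non-equivariant one on each orbit of components.

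The only genuinely non-routine point — and the step I expect to be the main obstacle — is verifying this equivariance of the excess bundle construction at the derived level, i.e.\ checking that the isomorphism of Lemma \ref{lem:excess} between $\sum(-1)^i \pi_i \cO_{X\times_Z Y}$ and $\lambda_{-1}(E^\vee)$ is natural enough to be upgraded to an equality of $G$-equivariant classes rather than merely an equality on each connected component. Once that naturality is granted, the corollary is immediate by applying the equivalence $K_0(I^2 \cX) \cong K_G(I^2_G X)$ to Proposition \ref{prop:excess}.
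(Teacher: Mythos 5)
Your proposal is correct and follows essentially the same route as the paper: the paper's own proof simply deduces the corollary from Proposition \ref{prop:excess} via the quotient presentations $P \cX = [P_GX/G]$ and $\cI^2 \cX = [\cI^2_G X/G]$, with the $G$-equivariant structure on $\cE_{\cI^2}$ and $\cE_P$ recorded in the paragraph preceding the statement. Your additional discussion of how the $G$-action permutes the components $X^{g,h}$ by simultaneous conjugation and why the excess-bundle identity descends is a reasonable elaboration of details the paper leaves implicit.
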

\begin{proof}
The claim follows from Proposition \ref{prop:excess}, and the fact that $P \cX$ and $\cI^2 \cX$ are isomorphic to the quotients
$
P \cX = [P_GX/G], \text{  } \cI^2 \cX = [\cI^2_G X /G] $ (see Lemma \ref{lem:quot} and Remark \ref{rem:dine}). 
\end{proof}

\begin{remark}
\label{rem:excess}
Direct inspection reveals that class of $\cE_{\cI^2}$ in $K_0(I^2 \cX)$ coincides with the class $\cB$ that appears in the definition of the virtual orbifold product, see Definition \ref{def:virt}. This observation is a key  ingredient in the proof of Theorem \ref{main:loop}. \end{remark}

\subsection{The proof of the main Theorem }
In this Section we prove that the string tensor product on $\cC oh(\cX)$ categorifies the virtual orbifold product. 
As before, we assume that $\cX$ is a smooth DM stack with finite stabilizers that admits a presentation as the  global quotient $[X/G]$ of an affine scheme by a linear group. It will be important to refer to various maps relating $\cI^2 \cX$, $P \cX$, $I^2 \cX$, $L \cX$ and 
$I \cX$: we let $\iota: I \cX \rightarrow L \cX$ be the natural embedding, and for the rest use the same notations as in Remark \ref{rem:commutativity}.
 
 \begin{lemma}
\label{lem:conv-pro}
Let $\cF$ and $\cG$ be in $\cC oh(\cI \cX)$. 
Then  
$
\iota_*\cF \otimes^{str} \iota_*\cG \cong \iota_*r_{3*} (r_1^*\cF \otimes r_2^*\cG). 
$
\end{lemma}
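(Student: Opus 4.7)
The plan is to unfold the definition of $\otimes^{str}$, apply base change to each pullback $p_i^* \iota_* \cF$ using the first fiber square of Lemma \ref{lem:fiberfiber}, then use Lemma \ref{lem:3} to rewrite the tensor product of the two resulting pushforwards as a single push-pull from the derived fiber product $Y \times_{P\cX} Y$, which is identified with $\cI^2 \cX$ by the second part of Lemma \ref{lem:fiberfiber}. The final step identifies all pullbacks via the commutativity of the diagram in Remark \ref{rem:commutativity}.

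More concretely, write $a : Y \to I\cX$ for the common projection and $b_1, b_2 : Y \to P\cX$ for the two liftings of $Y \to L\cX$ that make each of the squares in the first part of Lemma \ref{lem:fiberfiber} Cartesian. Base change (Proposition 1.4 of \cite{To4}) gives equivalences
$$p_i^* \iota_* \cH \cong b_{i*} a^* \cH, \quad i = 1,2.$$
Plugging this into $\iota_*\cF \otimes^{str} \iota_*\cG = p_{3*}(p_1^*\iota_*\cF \otimes p_2^*\iota_*\cG)$ produces the tensor product $b_{1*} a^* \cF \otimes b_{2*} a^* \cG$, to which Lemma \ref{lem:3} applies: the fiber product $Y \times_{P\cX} Y$ is $\cI^2 \cX$ by the second square of Lemma \ref{lem:fiberfiber}, and writing $l_1, l_2 : \cI^2 \cX \to Y$ for the two projections and $l = b_1 \circ l_1 = b_2 \circ l_2$ for the induced map to $P\cX$, we obtain
$$b_{1*} a^* \cF \otimes b_{2*} a^* \cG \cong l_*\bigl( l_1^* a^* \cF \otimes l_2^* a^* \cG \bigr).$$

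The remaining step is the identification $a \circ l_i = r_i$ for $i=1,2$. This is pure diagram chasing using Remark \ref{rem:commutativity}: from $p_i \circ l = \iota \circ r_i$ and $p_i \circ b_i = \iota \circ a$ we get $\iota \circ (a \circ l_i) = p_i \circ b_i \circ l_i = p_i \circ l = \iota \circ r_i$, so since $\iota$ is an embedding (in particular a monomorphism on mapping spaces) we conclude $a \circ l_i = r_i$. Hence $l_i^* a^* = r_i^*$, giving
$$l_*\bigl( l_1^* a^* \cF \otimes l_2^* a^* \cG \bigr) \cong l_*\bigl( r_1^* \cF \otimes r_2^* \cG \bigr).$$
Applying $p_{3*}$ and using $p_3 \circ l = \iota \circ r_3$ from the same remark to rewrite $p_{3*} l_* = \iota_* r_{3*}$, we obtain the desired equivalence.

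I do not expect any serious obstacle: the entire argument is a formal chain of base change and projection-formula identities, once one has Lemma \ref{lem:fiberfiber} and the commutativity diagram of Remark \ref{rem:commutativity} in hand. The mildly delicate point is bookkeeping the two distinct lifts $b_1, b_2$ of the common map $Y \to L\cX$, and keeping the composites $a \circ l_i$ straight with respect to the $r_i$; one should be attentive here because the factor whose pullback ultimately becomes $r_i^*\cH$ is selected by which $p_i$ appears in the original fiber square. Aside from this care, no calculation with explicit classes or excess bundles is needed, and the identification of the virtual product with the string tensor product is deferred to the passage to $K_0$.
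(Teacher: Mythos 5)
Your proposal is correct and follows essentially the same route as the paper's proof: base change along the first Cartesian square of Lemma \ref{lem:fiberfiber} to rewrite $p_i^*\iota_*$ as a push-pull through $Y$, then Lemma \ref{lem:3} applied to $Y\times_{P\cX}Y\cong\cI^2\cX$, and finally the identifications of composites from Remark \ref{rem:commutativity} (your $a,b_i,l_i$ are the paper's $n_i,u_i,s_i$). The only cosmetic difference is that you justify $a\circ l_i=r_i$ by a monomorphism argument where the paper simply invokes the commutative diagram of Remark \ref{rem:commutativity}.
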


\begin{proof}
Consider the diagram

$$
\xymatrix{
& & \cI^2 \cX  \ar[rd]^{s_2} \ar[ld]_{s_1} & & \\
& Y \ar[dl]_{n_1} \ar[r]^{u_1} & P \cX \ar[dl]^{p_1} \ar[d]^{p_3} \ar[dr]_{p_2} & Y \ar[l]_{u_2} \ar[dr]^ {n_2} & \\
I \cX  \ar[r]^{\iota} & L\cX & L \cX & L \cX & I\cX  \ar[l]_\iota, }
$$

where the right, left and top squares are all fiber products: note that $\cI^2 \cX$ is the fiber product of the top square by  Lemma \ref{lem:fiberfiber}. As in Remark \ref{rem:commutativity}, we denote $l$ the composition $u_1 \circ s_1 \cong u_2 \circ s_2$.  
 The base change 
formula \cite{To4} Proposition 1.4 gives equivalences $p_1^*\iota_* \cF \cong u_{1*}n_1^*\cF$ and $p_2^*\iota_*\cG \cong u_{2*}n_2^*\cF$. Using Lemma \ref{lem:3} we can rewrite 
$$
p_1^*\iota_* \cF \otimes p_2^*\iota_* \cG \cong u_{1*}n_1^*\cF \otimes u_{2*}n_2^*\cG \cong  u_{1*}s_{1*}(s_1^{*}n_1^*\cF \otimes s_2^{*}n_2^*\cG) 
\cong l_*(r_1^*\cF \otimes r_2^*\cG). 
$$ 
Recall that  $p_3 \circ l \cong \iota \circ r_3$ (see Remark \ref{rem:commutativity}), and thus
$$
\iota_* \cF \otimes^{str} \iota_* \cG  = p_{3*}(p_1^*\iota_*\cF \otimes p_2^* \iota_*\cG ) \cong p_{3*}l_*(r_1^*\cF \otimes r_2^* \cG) \cong \iota_* r_{3*}(r_1^*\cF \otimes r_2^* \cG).
$$
\end{proof}

\begin{lemma}\label{product and derived thickening}
\label{prop:orbi}
Denote 
$- \cdot -$ the product on $K_0(I^2 \cX)$ induced by the ordinary tensor product of sheaves on $I^2 \cX$. Denote $\cD$ the class $\Sigma_i (-1)^ i \pi_i \cO_{\cI^2 \cX}$ in 
$K_0(I^2 \cX)$. If $\cF, \cG$ are in $\cC oh(I \cX)$, then $
q_{3*}(q_1^* \cF \cdot q_2^* \cG \cdot \cD) = r_{3*}(r_1^*\cF \otimes r_2^* \cG) 
$ in $K_0(I \cX)$. 
\end{lemma}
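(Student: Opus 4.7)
The plan is to reduce the K-theoretic identity on the underived stack $I^2\cX$ to an identity involving the derived pushforward from $\cI^2\cX$, by transporting classes along the canonical embedding $j: I^2 \cX \to \cI^2 \cX$. The key compatibility I will exploit is the triangle from Remark \ref{rem:commutativity}: $r_i \circ j = q_i$ for $i=1,2,3$. Combined with Corollary \ref{cor:heart}, which identifies $j_* \cD = j_*\bigl(\sum_i(-1)^i\pi_i \cO_{\cI^2\cX}\bigr)$ with $\cO_{\cI^2\cX}$ in $G_0(\cI^2\cX)$, this will let me absorb the correction class $\cD$ into the derived structure of $\cI^2\cX$.

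Concretely, I would first use $q_i = r_i\circ j$ and the fact that $j^*$ is a ring homomorphism on K-theory to rewrite
\[
q_1^*\cF \cdot q_2^*\cG \cdot \cD \;=\; j^*(r_1^*\cF \otimes r_2^*\cG)\cdot \cD
\]
in $K_0(I^2\cX)$. Next I would apply the (derived) projection formula along $j$ to get
\[
j_*\bigl( j^*(r_1^*\cF \otimes r_2^*\cG)\cdot \cD\bigr) \;=\; (r_1^*\cF \otimes r_2^*\cG)\otimes j_*\cD \;=\; r_1^*\cF \otimes r_2^*\cG
\]
in $G_0(\cI^2\cX)$, where the second equality uses Corollary \ref{cor:heart}. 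Finally, pushing forward by $r_3$ and using $r_3 \circ j = q_3$ yields
\[
q_{3*}\bigl(q_1^*\cF \cdot q_2^*\cG \cdot \cD\bigr) \;=\; r_{3*}j_*\bigl(j^*(r_1^*\cF\otimes r_2^*\cG)\cdot \cD\bigr) \;=\; r_{3*}(r_1^*\cF\otimes r_2^*\cG),
\]
which is the desired identity.

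The main conceptual point is that going through $j_*$ converts the ``correction term'' $\cD$ into the structure sheaf on the derived thickening, which is precisely the content of Corollary \ref{cor:heart}. The only technical care needed is that the projection formula along the closed embedding $j$ of a derived DM stack into its underlying classical stack actually yields the stated identity in $G$-theory; this is standard for quasi-compact, quasi-separated closed immersions (cf. \cite{To4} Proposition 1.4) and is the step I expect to verify with most care, though no real obstacle should arise since the ambient categories are well-behaved. All other manipulations are purely formal consequences of the commutative diagram in Remark \ref{rem:commutativity} and the multiplicativity of $j^*$.
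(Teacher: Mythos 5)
Your proposal is correct and is essentially the paper's own argument: both rest on the identification $j_*\cD=\cO_{\cI^2\cX}$ in $G_0(\cI^2\cX)$ from Corollary \ref{cor:heart}, the projection formula along $j$, and the compatibilities $q_i=r_i\circ j$ from Remark \ref{rem:commutativity}. The only difference is cosmetic --- you run the chain of equalities from the left-hand side to the right, whereas the paper starts from $r_{3*}(r_1^*\cF\otimes r_2^*\cG)$ and works backwards.
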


\begin{proof}
As in Remark \ref{rem:commutativity} let $j: I^2 \cX \rightarrow \cI^2 \cX$ be the natural embedding. Recall by Corollary \ref{cor:heart} that the class of $\cO_{\cI ^2 \cX}$ in $G_0(\cI^2 \cX)$ is equal to $j_*\cD$. 
Note also that $r_1^*\cF \otimes r_2^* \cG$ lies in $\cC oh(\cI^2 \cX)$, and that 
$r_{3*}$ descends to a map $r_{3*}: G_0(\cI^2 \cX) \rightarrow K_0(I \cX)$. 
We have the following equalities in $K_0(I \cX)$: 
$$
r_{3*}(r_1^*\cF \otimes r_2^*\cG) = r_{3*}(r_1^*\cF \otimes r_2^*\cG \otimes \cO_{\cI ^2 \cX}) = r_{3*}(r_1^*\cF \otimes r_2^*\cG \otimes j_*\cD) = 
r_{3*} j_*(j^*(r_1^*\cF \otimes r_2^*\cG) \cdot \cD), 
$$
where the last one is a consequence of  the projection formula. Further we can write   
$$
r_{3*} j_*(j^*(r_1^*\cF \otimes r_2^*\cG) \cdot \cD) = r_{3*} j_*((j^*r_1^*\cF \cdot j^*r_2^*\cG) \cdot \cD) = q_{3*}(q_1^*\cF \cdot q_2^* \cG \cdot \cD),
$$
as $q_i = r_i \circ j$ for all $i=1,2,3$ (see Remark \ref{rem:commutativity}) and this concludes the proof. 
\end{proof}

\begin{theorem}
Let $\iota: I \cX \rightarrow L \cX$ be the natural embedding. Then 
$\iota_*$ is an isomorphism of rings:  
$
\iota_*: K^{virt}(\cX) = K_0(I \cX, *^{virt}) \stackrel{\cong}{\rightarrow} G_0(L \cX, \otimes^{str}).
$
\end{theorem}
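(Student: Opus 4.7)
The plan is to decompose the statement into two pieces: first, that $\iota_*$ is an isomorphism of abelian groups, and second, that it is multiplicative. The group-isomorphism statement is essentially already proved, since $\iota: I\cX = t_0(L\cX) \to L\cX$ is the canonical embedding of the truncation and Corollary \ref{cor:heart} gives $\iota_*: G_0(I\cX) \stackrel{\cong}{\to} G_0(L\cX)$. Because $\cX$ is smooth, $I\cX$ is smooth (as a DM stack with finite stabilizers) and $G_0(I\cX) = K_0(I\cX)$, so $\iota_*$ is a group isomorphism between the underlying groups of $K^{virt}(\cX)$ and $G_0(L\cX, \otimes^{str})$.

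For the multiplicativity, I would chain together the preparatory lemmas of this section. Let $\cF, \cG \in \cC oh(I\cX)$; I want to verify the identity
\[
\iota_*\cF \otimes^{str} \iota_*\cG \;\cong\; \iota_*(\cF *^{virt} \cG)
\]
in $G_0(L\cX)$. By Lemma \ref{lem:conv-pro}, the left-hand side equals $\iota_* r_{3*}(r_1^*\cF \otimes r_2^*\cG)$, so it suffices to compute $r_{3*}(r_1^*\cF \otimes r_2^*\cG)$ as a class in $K_0(I\cX)$. Lemma \ref{prop:orbi} rewrites this derived pushforward from the derived double inertia stack $\cI^2\cX$ as
\[
r_{3*}(r_1^*\cF \otimes r_2^*\cG) \;=\; q_{3*}\bigl(q_1^*\cF \cdot q_2^*\cG \cdot \cD\bigr),
\]
where $\cD = \sum_i (-1)^i \pi_i \cO_{\cI^2 \cX} \in K_0(I^2\cX)$ is the K-theoretic class of the derived structure sheaf.

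The final step is to identify $\cD$ with the obstruction bundle appearing in Definition \ref{def:virt}. By Corollary \ref{cor:excess}, $\cD = \lambda_{-1}(\cE_{\cI^2}^\vee)$ in $K_0(I^2\cX)$. By Remark \ref{rem:excess}, the excess bundle $\cE_{\cI^2}$ agrees componentwise with the class $\cB = u^*T_X + T_{I^2_G X} - q_1^*T_{I_G X} - q_2^*T_{I_G X}$ used to define $*^{virt}$. Combining these with the fact that $q_3 = \mu$, we obtain
\[
q_{3*}\bigl(q_1^*\cF \cdot q_2^*\cG \cdot \cD\bigr) \;=\; \mu_*\bigl(q_1^*\cF \cdot q_2^*\cG \cdot \lambda_{-1}(\cB^\vee)\bigr) \;=\; \cF *^{virt} \cG,
\]
and applying $\iota_*$ yields the required identity.

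The main obstacle, and the reason the preceding sections are as intricate as they are, is the identification of $\cD$ with $\lambda_{-1}(\cB^\vee)$: a priori the class of the derived structure sheaf $\cO_{\cI^2\cX}$ is a subtle object, and one needs both the excess intersection computation of Lemma \ref{prop:excessexcess} (to evaluate the excess bundles on each component $X^{g,h}$) and the decomposition of $\cI^2_G X$ into disjoint fixed loci from Remark \ref{rem:dine} to pin it down. Once these ingredients are in place, the remaining manipulations are base change and projection formula computations, so the whole argument reduces to a formal concatenation of the lemmas of Section \ref{sec:proof}.
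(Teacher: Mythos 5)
Your argument is correct and coincides with the paper's own proof: the group isomorphism is obtained from Corollary \ref{cor:heart}, and multiplicativity is established by chaining Lemma \ref{lem:conv-pro}, Lemma \ref{prop:orbi}, Corollary \ref{cor:excess} and Remark \ref{rem:excess} to identify $\cD = \sum_i (-1)^i \pi_i \cO_{\cI^2\cX}$ with the twisting class of Definition \ref{def:virt}. Your added remark that smoothness of $I\cX$ gives $G_0(I\cX) = K_0(I\cX)$ is a useful clarification but does not change the argument.
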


\begin{proof}
Recall that by Corollary \ref{cor:heart} the map $\iota_*$ is an isomorphism of groups. We need to prove that $\iota_*$ is also compatible with the product structures. 
Let $\cF$ and $\cG$ be in $\cC oh(I \cX)$. By Lemma \ref{lem:conv-pro} and Lemma \ref{prop:orbi}, 
$$
\iota_*\cF \otimes^{str} \iota_*\cG = \iota_*r_{3*}(r_1^*\cF \otimes r_2^* \cG) = \iota_*q_{3*}(q_1^*\cF \cdot q_2^* \cG \cdot \cD),
$$
where $\cD = \Sigma_i (-1)^ i \pi_i \cO_{\cI^2 \cX}$. By Corollary   \ref{cor:excess} there is an identity $\cD=\lambda_{-1}(\cE_{\cI^2}^\vee)$. We pointed out in Remark \ref{rem:excess} that the class of  $\cE_{\cI^2}$ in $K_0(I \cX)$ is equal to the class $\cB$ from Definition \ref{def:virt}: thus, in the notation of Definition \ref{def:virt}, $\cD = \lambda_{-1}(\cE_{\cI^2}^\vee) = \cR$. As a consequence we can rewrite 
$$
q_{3*}(q_1^*\cF \cdot q_2^* \cG \cdot \cD) = q_{3*}(q_1^*\cF \cdot q_2^* \cG \cdot \cR) = \cF *^{virt} \cG.
$$
Applying $\iota_*$, we obtain an  identity $
\iota_*(\cF) \otimes^{str} \iota_*(\cG) = \iota_*(\cF *^{virt} \cG)
$ in $G_0(L \cX)$, 
and this 
 concludes the proof.
\end{proof}

\end{document}